\documentclass[11pt]{amsart}
\usepackage{setspace}
\usepackage{graphicx}
\usepackage{geometry}
\usepackage{multirow}
\usepackage{commath}
\usepackage{mdframed}
\usepackage{amsmath, amsthm}
\usepackage{thmtools}
\usepackage{bbm}
\usepackage{amssymb}
\usepackage{array}
\newcolumntype{M}[1]{>{\centering\arraybackslash}m{#1}}
\usepackage{mathtools}

\usepackage{amsthm}
\usepackage{float}
\usepackage{caption}
\usepackage{color}
\usepackage{enumerate}
\usepackage{tikz}
\usepackage{hyperref}
\hypersetup{
   colorlinks = true, 
   linkcolor = blue, 
   filecolor = magenta,      
   urlcolor = black, 
}
\usepackage{url}
\expandafter\def\expandafter\UrlBreaks\expandafter{\UrlBreaks
  \do\a\do\b\do\c\do\d\do\e\do\f\do\g\do\h\do\i\do\j
  \do\k\do\l\do\m\do\n\do\o\do\p\do\q\do\r\do\s\do\t
  \do\u\do\v\do\w\do\x\do\y\do\z\do\A\do\B\do\C\do\D
  \do\E\do\F\do\G\do\H\do\I\do\J\do\K\do\L\do\M\do\N
  \do\O\do\P\do\Q\do\R\do\S\do\T\do\U\do\V\do\W\do\X
  \do\Y\do\Z}
\usepackage{setspace}
\usepackage{systeme}
\usepackage{tabularx}
\usepackage{cite}
\usepackage{etoolbox}
\usepackage[british]{babel}
\newtheorem{theorem}{Theorem}
\numberwithin{theorem}{section}
\newtheorem{lemma}[theorem]{Lemma}
\newtheorem{cor}[theorem]{Corollary}

\newtheorem{fact}[theorem]{Fact}

\newtheorem{prop}[theorem]{Proposition}
\newtheorem{claim}{Claim}[theorem]
\newtheorem*{claim*}{Claim}
\newtheorem*{theorem*}{Theorem}
\newtheorem*{prop*}{Proposition}
\newtheorem*{lemma*}{Lemma}
\newtheorem*{keyobservation*}{Key Observation}
\newtheorem*{conjecture*}{Conjecture}
\newtheorem*{mainthm*}{Main Theorem}
\numberwithin{equation}{section}
\theoremstyle{definition}

\newtheorem*{notation*}{Notation}
\newtheorem{defn}[theorem]{Definition}

\newtheorem{remark}[theorem]{Remark}

\newcommand{\R}[0]{\mathbb{R}}
\newcommand{\Q}[0]{\mathbb{Q}}
\newcommand{\N}[0]{\mathbb{N}}
\newcommand{\Z}[0]{\mathbb{Z}}
\newcommand{\M}[0]{\mathcal{M}}

\def\indep{\mathrel{\raise0.2ex\hbox{\ooalign{\hidewidth$\vert$\hidewidth\cr\raise-0.9ex\hbox{$\smile$}}}}}
\newcommand*{\vrectangle}{{\ooalign{\lower.3ex\hbox{\tiny$\sqcup$}\cr\raise.4ex\hbox{\tiny$\sqcap$}}}}
\counterwithout{equation}{section}
\keywords{Semi-equations, Zarankiewicz bounds, strong Erd\H{o}s--Hajnal, linearity}
\subjclass[2020]{03C45, 05C35.}
\title{Combinatorics in 1-semi-equational theories}
\author{Mervyn Tong}
\address{Department of Pure Mathematics and Mathematical Statistics, Centre for Mathematical Sciences, Wilberforce Road, Cambridge CB3 0WB, United Kingdom}
\email{hwmt3@cam.ac.uk}
\date{\today}
\begin{document}
\begin{abstract}
    We give direct combinatorial proofs that 1-semi-equational theories satisfy two combinatorial properties that imply the non-interpretability of certain fields. Our main result is that Boolean combinations of 1-semi-equations have almost linear Zarankiewicz bounds, and hence no infinite field is interpretable in a 1-semi-equational theory; this answers a question of Chernikov--Mennen and Chernikov--Starchenko, who had respectively established almost linear Zarankiewicz bounds for Boolean combinations of $(2,1)$-semi-equations and Boolean combinations of weakly normal relations. (This was recently and independently established by Gou, Mirabi, Mittal, Tran, and Yang, using different techniques and producing different bounds.) We also show that $(k,1)$-semi-equations satisfy the $\delta$-strong Erd\H{o}s--Hajnal property with $\delta=1/6^{k-1}$; this had previously been established by Chernikov--Starchenko for an ineffective constant $\delta>0$.
\end{abstract}
\maketitle
\section{Introduction}
A key strand of model-theoretic research is the development of tools to detect the presence or absence of algebraic structure in a given first-order structure. This is most famously encapsulated by Zilber's Trichotomy Conjecture, which states that every non-trivial strongly minimal set either is locally modular --- like a module, thus having linear geometry --- or interprets an algebraically closed field. Although this was disproved by Hrushovski \cite{hrushovskiconstructions}, the idea that linearity is closely connected to the absence of a field has remained a guiding principle for model-theoretic research; versions of Zilber's Trichotomy Conjecture have been proven in various contexts, such as in o-minimal structures by Peterzil and Starchenko \cite{peterzilstarchenko}.

One approach in the literature to identifying notions of linearity is to find combinatorial properties of definable relations that preclude the interpretability of (certain) fields. One such property is that of \textit{almost linear Zarankiewicz bounds}. For a relation $R\subseteq X\times Y$ and $t,m,n\in\N^+$, write
\[z_t^R(m,n):=\max\left\lbrace |R(A,B)|: A\subseteq X, B\subseteq Y, |A|=m, |B|=n, R(A,B)\text{ is }K_{t,t}\text{-free}\right\rbrace.\]
Say that $R$ has \textit{almost linear Zarankiewicz bounds} if $z_t^R(n,n)=O_{t,\varepsilon}(n^{1+\varepsilon})$ for all $t\in\N^+$ and $\varepsilon\in\R^+$. Using the fact that the point-line incidence relations over $\Q$ and $\mathbb{F}_p^{\text{alg}}$ (for any prime $p$) do not have almost linear Zarankiewicz bounds, it is known that a theory in which all definable binary relations have almost linear Zarankiewicz bounds does not interpret an infinite field --- this follows from the proofs of \cite[Corollary 5.11]{semilinearzarankiewicz} and \cite[Corollary 6.3]{distalregularitylemma}, and a full argument is given in \cite[Proposition 4.2]{onebased}.

Almost linear Zarankiewicz bounds have been established for a variety of graphs, such as semilinear graphs over the real field \cite{semilinearzarankiewicz, semilinear1, semilinear2, semilinear3, semilinear4, semilinear5}, semilinear graphs over valued vector spaces \cite{valuedvectorspaceszarankiewicz}, Boolean combinations of $(2,1)$-semi-equations \cite{chernikovmennen}, and Boolean combinations of weakly normal relations \cite{onebased}. Of particular relevance to our discussion so far is the last item; a binary relation $R$ is \textit{weakly normal} if there is $k\in\N^+$ such that, given any $k$ fibres of $R$ with non-empty intersection, two of those fibres are equal. Among strongly minimal theories, local modularity is equivalent to a property known as \textit{one-basedness}, and it is known that a stable theory is one-based if and only if every definable binary relation is a Boolean combination of weakly normal relations \cite{hrushovskipillay}. The result in \cite{onebased}, by Chernikov and Starchenko, thus says that definable relations $R$ in stable, `linear' (one-based) theories satisfy the combinatorial notion of linearity that is almost linear Zarankiewicz bounds (in fact, $R$ has \textit{linear} Zarankiewicz bounds, namely, $z^R_t(n,n)=O_t(n)$ for all $t\in\N^+$).

In \cite{chernikovmennen}, Chernikov and Mennen propose a generalisation of one-basedness from the stable context to the NIP context: \textit{$1$-semi-equationality}. A binary relation $R$ is a \textit{$1$-semi-equation} if there is $k\in\N^+$ such that, given any $k$ distinct fibres of $R$ with non-empty intersection, two of those fibres are such that one is contained in the other; in that case, say that $R$ is a \textit{$(k,1)$-semi-equation}. A theory $T$ is \textit{1-semi-equational} (respectively, \textit{$(k,1)$-semi-equational}) if every definable binary relation is a Boolean combination of definable 1-semi-equations (respectively, $(k,1)$-semi-equations). Chernikov and Mennen prove that a binary relation is weakly normal if and only if it is a stable 1-semi-equation, and so a stable theory is one-based if and only if it is 1-semi-equational \cite[Proposition 2.19]{chernikovmennen}.

They also prove that any Boolean combination of $(2,1)$-semi-equations has almost linear Zarankiewicz bounds, and hence a $(2,1)$-semi-equational theory does not interpret an infinite field \cite[Remark 2.24]{chernikovmennen}. It is natural to ask if this result can be extended to 1-semi-equational theories; indeed, this question is posed both by Chernikov and Mennen \cite[Problem 2.26]{chernikovmennen} --- as an extension of their result on $(2,1)$-semi-equationality --- and by Chernikov and Starchenko \cite[Problem 6.2]{onebased} --- as an extension of their result on one-basedness. The main result of this paper answers this affirmatively.

\begin{theorem}[Theorem \ref{mainthm}]\label{intromainthm}
    Let $k\in \N^+$. Let $R_1, \dots, R_d, R^{(-1)}, \dots, R^{(-d')}\subseteq X\times Y$ be $(k,1)$-semi-equations, and let $R=R_1\wedge \dots\wedge R_d\wedge \neg R^{(-1)}\wedge \dots \wedge \neg R^{(-d')}$. Then, for all $t\in\N^+$,
    \[z_t^R(n,n)=\begin{cases}
        O_t(n)&\text{if }d'=d=0,\\
        O_{t,k,d}\left(n\log^{2d-2}(n+1)\right)&\text{if }d'=0\text{ and }d>0,\\
        O_{t,k,d,d'}\left(n\log^{2d+d'-1}(n+1)\right)&\text{if }d'>0.
    \end{cases}\]
\end{theorem}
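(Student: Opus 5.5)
Throughout, fix $A\subseteq X$, $B\subseteq Y$ with $|A|=|B|=n$ and $R(A,B)$ $K_{t,t}$-free, and view each $(k,1)$-semi-equation through its fibres $R_i(x)\subseteq B$ for $x\in A$. The proof is an induction on $d+d'$, with the case $d=d'=0$ trivial: then $R$ is the full relation $X\times Y$, and a $K_{t,t}$-free complete bipartite graph on $A\times B$ has fewer than $t$ vertices on one side, so $|R(A,B)|\le (t-1)\cdot 2n=O_t(n)$.

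The base structural step is a single $(k,1)$-semi-equation $S$ (the case $d=1$, $d'=0$, where the target is $O(n)$). The defining property says that the family of fibres $\{S(x)\cap B:x\in A\}$ has no antichain of size $k$ among fibres sharing a common point. Concretely, for each $b\in B$ the distinct fibres through $b$, ordered by inclusion, have width $<k$. By Dilworth they therefore split into $<k$ chains. Choose in each chain the $t$ largest fibres. Any $b$ lying in a chain with at least $t$ fibres also lies in the $t$-th largest of them, and so lies in the common intersection of those $t$ fibres. This gives a $K_{t,t}$ unless that intersection meets $B$ in fewer than $t$ points.

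This suggests a charging argument. Each edge $(x,b)$ is charged either to a ``short'' position, meaning $x$'s fibre is among the top $t$ in a chain through $b$ (at most $kt$ such $x$ per $b$, contributing $O(ktn)$ edges), or to a point $b$ inside a $t$-fold intersection of nested fibres. The second kind is bounded using $K_{t,t}$-freeness: equal fibres must be handled separately, and those contribute $O(tn)$ since at most $t-1$ points of $A$ can share a fibre of size $\ge t$. Writing this carefully should give $z_t^S(n,n)=O_{t,k}(n)$.

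For general conjunctions I would use divide and conquer. Split $A$ into two halves by a linear order adapted to $R_d$; the plan is to order $A$ by a linear extension of fibre-inclusion of $R_d$, which is a partial order of bounded width locally. The goal is that across the split, the edges of $R_d$ between $A_1$ and $B$ and those between $A_2$ and $B$ can be replaced by a ``monotone'' relation: for pairs straddling the cut, membership in $R_d$ is determined by a threshold per $b$. A threshold relation is a $(2,1)$-semi-equation/order-like relation, which costs one logarithmic factor per recursion level of halving and one more for the induced order structure. That accounts for the exponent $2$ per positive conjunct, $2d-2$ after the base case absorbs one conjunct. The recursion $f(n)\le 2f(n/2)+O(n\log^{2d-4}n)$ then yields the claimed $n\log^{2d-2}n$.

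Negated conjuncts are handled similarly but cheaper. The complement of a $(k,1)$-semi-equation has fibres that are complements, and reverse inclusion preserves the semi-equation property only on the $B$ side after dualising. So I would remove one $\neg R^{(-j)}$ at a time using a single halving, giving one log factor each, plus one extra log when $d=0$ to start the induction. This yields $2d+d'-1$.

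The main obstacle is making the reduction precise: showing that after splitting, the cross-part of a $(k,1)$-semi-equation really becomes a Boolean combination of simpler (threshold/order-type) relations to which the inductive hypothesis for fewer semi-equation conjuncts applies, while keeping $K_{t,t}$-freeness. The first approach I would try is to partition $B$ according to which of the $<k$ chains $b$ belongs to, so that within each chain the relation is genuinely a threshold in the linear order.
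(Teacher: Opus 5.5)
Your case $d=d'=0$ is fine. Your per-point Dilworth argument for a single $(k,1)$-semi-equation is also a valid alternative to the paper's peeling of maximal layers. Through each point at most $(k-1)(t-1)$ distinct fibres have size $\ge t$: anything below the top $t-1$ of a chain lies inside the $t$-th largest, which must have fewer than $t$ elements. Each such fibre is shared by at most $t-1$ parameters, and fibres of size $<t$ carry at most $(t-1)n$ edges. This recovers the paper's bound $(k-1)(t-1)^2n+(t-1)n$. (As written, you use the semi-equation property for the fibres $R(x)\subseteq B$, $x\in A$. It is only assumed for the fibres $R_y\subseteq X$, $y\in Y$, and it is not symmetric, so you must swap the roles of the two sides; this is harmless here since $|A|=|B|$.)

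The genuine gap is the inductive step for a second positive conjunct or for any negated conjunct. That step is the whole content of the theorem, and you leave it as the ``main obstacle''. The mechanism you sketch does not work:
\begin{enumerate}[(i)]
\item The sets $\{x: b\in R_d(x)\}$ are up-sets for fibre inclusion. But final segments of a single linear order are nested, whereas these sets need not be (take two points lying in disjoint fibres). Width at most $k-1$ holds only among fibres sharing a common point, not for the whole fibre poset.
\item A threshold relation is itself a $(2,1)$-semi-equation, so substituting one for $R_d$ does not reduce the number of semi-equation conjuncts.
\item Splitting only $A$ does not give a recurrence $f(n)\le 2f(n/2)+\cdots$, since both halves still see all of $B$. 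Even granting that recurrence, a cross term $O(n\log^{2d-4}n)$ gives $n\log^{2d-3}n$, not $n\log^{2d-2}n$.
\item For negations, complements of semi-equations are not semi-equations (e.g.\ $\neq$), and your log count does not give $d'-1$ when $d=0$.
\end{enumerate}

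What is missing are the three ingredients the paper combines.
\begin{enumerate}[(i)]
\item If $\{R_b: b\in B_0\}$ is a chain with at most $2^j$ distinct non-empty members, then $R(A,B_0)$ is a union of $j+1$ sets, each a union of boxes with pairwise disjoint sides (Lemma~\ref{chainlemma}: split at the middle fibre; the parts below and above it live in boxes with disjoint sides).
\item A bound $z_t^{R'}(m,n)\le c\cdot(m+n)$ passes additively to $R'\cap F$ for any such $F$ (Lemma~\ref{boxlemma}). Together with (i), this is the correct form of your ``threshold'' idea: a chain of fibres costs a factor $\lceil\log_2 m\rceil+1$.
\item Crucially, you need a \emph{global} partition of $B$ into groups in each of which all fibres share a common point, with the groups' supports overlapping boundedly. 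Then Dilworth yields at most $k-1$ chains per group, and complements of a chain form a chain.
\end{enumerate}

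For negated conjuncts, fix distinct $a_1,\dots,a_t\in A$. All but fewer than $t$ elements of $B$ lie in some $B_{e,s}=\{b: R^{(e)}(a_s,b)\}$, whose $R^{(e)}$-fibres all contain $a_s$. This costs one log per negated conjunct after the first.

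For a positive conjunct, bucket $b$ by $|R_b|$ in ratio $k/(k-1)$; there are $O(k\log m)$ buckets, and this is the second log. Within a bucket, group $b$ by the set $\mathcal{D}$ of minimal fibres of the bucket contained in $R_b$. Lemma~\ref{unionbound} forces $1\le|\mathcal{D}|\le k-1$, and all fibres in a group contain any fixed $C_0\in\mathcal{D}$. The semi-equation property shows that the supports of any $k$ distinct groups with the same $|\mathcal{D}|$ have empty common intersection, so the supports have total size at most $(k-1)^2m$.

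Your closing idea of splitting into the $<k$ chains is exactly the Dilworth step. Without such a grouping, however, there is no common point to which it applies.
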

The implied constants are effective: see Theorem \ref{mainthm}. Since $z_t^{E\vee E'}(n,n)\leq z_t^E(n,n)+z_t^{E'}(n,n)$ for all relations $E$ and $E'$, this shows that any Boolean combination of $(k,1)$-semi-equations has almost linear Zarankiewicz bounds. Thus, no infinite field is interpretable in a 1-semi-equational theory.

Our proof method is a direct combinatorial analysis of the partially ordered set comprising the fibres of a given Boolean combination of $(k,1)$-semi-equations. In particular, it is different to the proof of Chernikov and Mennen in \cite{chernikovmennen} for $(2,1)$-semi-equations, which used the fact that $(2,1)$-semi-equations are Boolean combinations of so-called \textit{basic} relations \cite[Definition 2.21]{chernikovmennen}, and such Boolean combinations are known to have almost linear Zarankiewicz bounds \cite{semilinearzarankiewicz}. We do not know if $(k,1)$-semi-equations are Boolean combinations of \textit{basic} relations when $k>2$. (If this were true, then every $(k,1)$-semi-equation would be a Boolean combination of $(2,1)$-semi-equations, as basic relations are $(2,1)$-semi-equations.)

Shortly before the release of this paper, almost linear Zarankiewicz bounds for Boolean combinations of 1-semi-equations were also obtained and announced by Gou, Mirabi, Mittal, Tran, and Yang, using different methods \cite{gouetal}. For the relation $R$ in Theorem \ref{intromainthm} with $d+d'>0$, they obtain the bound $z_t^R(n,n)=O_{t,k,d,d'}(n\log^{(d+d'-1)\alpha_k}(n+1))$, where $\alpha_k:=\min (k-1, 2)$.

Aside from almost linear Zarankiewicz bounds, we prove another combinatorial property of $(k,1)$-semi-equations that precludes the interpretability of certain fields: the \textit{strong Erd\H{o}s--Hajnal} property. A relation $R\subseteq X\times Y$ satisfies \textit{strong Erd\H{o}s--Hajnal (sEH)} if there is $\delta>0$ such that, for all finite $A\subseteq X$ and $B\subseteq Y$, there are $A_0\subseteq A$ and $B_0\subseteq B$ with $|A_0|\geq \delta|A|$ and $|B_0|\geq \delta|B|$ such that $R\cap (A_0\times B_0)$ is either $A_0\times B_0$ or $\emptyset$; in that case, say that $R$ satisfies \textit{$\delta$-strong Erd\H{o}s--Hajnal ($\delta$-sEH)}. Using the fact that the point-line incidence relation over $\mathbb{F}_p^{\text{alg}}$ (for any prime $p$) does not satisfy sEH, it is known that a theory in which all definable binary relations satisfy sEH does not interpret an infinite field of positive characteristic --- see \cite[proof of Corollary 6.3]{distalregularitylemma}. It may, however, interpret one of \textit{zero} characteristic: the theory of the real ordered field is distal, and hence all definable binary relations satisfy sEH \cite[Theorem 3.1]{distalregularitylemma}.

In \cite[Theorem 3.1]{21SEsEH}, Fu proves that $(2,1)$-semi-equations satisfy $\delta$-sEH with $\delta=1/64$. In \cite[Theorem 3.3]{onebased}, Chernikov and Starchenko extended this to 1-semi-equations (that is, $(k,1)$-semi-equations for any $k\in\N^+$), showing that they satisfy $\delta$-sEH for some ineffective constant $\delta>0$; their proof uses a deep result in structural graph theory from \cite{deepstructuralgraphtheory}. By a direct combinatorial analysis, we establish the following result.
\begin{theorem}[Theorem \ref{mainthmsEH}]\label{intromainthmsEH}
    For all $k\in\N^+$, if $R\subseteq X\times Y$ is a $(k,1)$-semi-equation, then $R$ satisfies $\delta$-sEH for $\delta=1/6^{k-1}$.
\end{theorem}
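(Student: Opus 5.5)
We argue by induction on $k$, losing a factor of $6$ at each step. Write $R_a=\{b\in Y: (a,b)\in R\}$ for the fibres. For $k=1$ the hypothesis says that no single fibre is non-empty, since a single fibre with non-empty intersection would already produce the forbidden configuration. So $R=\emptyset$, and we may take $A_0=A$, $B_0=B$; this gives $\delta=1$. We should check the conventions for $k=1$ (and also the case $k=2$) against the definition. In general we pass from a $(k,1)$-semi-equation to a $(k-1,1)$-semi-equation on sets $A'\subseteq A$, $B'\subseteq B$ with $|A'|\geq|A|/6$ and $|B'|\geq|B|/6$. Applying the induction hypothesis to $(A',B')$ then gives homogeneous sets of relative size $1/6^{k-2}$ inside $A',B'$, hence $1/6^{k-1}$ overall. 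Alternatively, we stop early when a homogeneous pair of relative size $1/6$ appears directly.

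The reduction step proceeds as follows. Fix finite $A,B$ and work with the family of traces $R_a\cap B$ for $a\in A$. Order $A$ by inclusion of traces, identifying elements with equal traces. Reduce first to the case where every $a\in A$ has $|R_a\cap B|\geq |B|/2$; otherwise pass to $\neg R$ on a half of $A$, noting that the complementary relation has to be handled symmetrically. Then pick a "median" element. Concretely, choose $a^*$ such that at least $|A|/3$ elements have traces not containing $R_{a^*}\cap B$, and at least $|A|/3$ elements are not contained in it. There are then three possibilities:
\begin{enumerate}[(i)]
\item many elements $a$ with $R_a\cap B\subseteq R_{a^*}\cap B$;
\item many elements with $R_a\cap B\supseteq R_{a^*}\cap B$;
\item many elements incomparable to $a^*$.
\end{enumerate}
In case (iii), restrict to $B'=R_{a^*}\cap B$ and to those $a$ incomparable to $a^*$ whose fibres meet $B'$. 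Among these, $(k-1)$ distinct fibres with a common point of $B'$ together with $R_{a^*}$ would be $k$ distinct fibres with a common point. So two of them are comparable; and since each is incomparable with $R_{a^*}$, the comparable pair must come from the original $k-1$. Thus $R$ restricted to $A'\times B'$ is a $(k-1,1)$-semi-equation. The elements whose fibres miss $B'$ give $R\cap(A''\times B')=\emptyset$ directly, which is homogeneous. The density assumption guarantees $|B'|\geq|B|/2$. Cases (i) and (ii) are handled by a chain argument. Iterating comparability along a chain yields either a long chain, which is handled by a halving argument, or a large antichain, which falls into the incomparable case. For a chain $C_1\subseteq\dots\subseteq C_m$, take the middle element $C_{m/2}$. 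Points of $B$ split by a point-threshold argument: there is a set of $|B|/3$ points contained in all upper-half fibres, or a set of $|B|/3$ points avoided by all lower-half fibres, which gives an all-ones or all-zeros rectangle. The balancing here is where the constant $6=2\cdot3$ should come from.

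The main obstacle is making the median and threshold choices uniform, so that in each branch both sides retain a $1/6$ fraction. This is especially delicate when passing to incomparable elements, because one must ensure that those fibres genuinely intersect inside $B'$ in a way that witnesses the drop from $k$ to $k-1$. I would first try to choose $a^*$ minimal among fibres of size at least $|B|/6$ containing at least $|A|/6$ other fibres, and verify the three-way case split with that choice.
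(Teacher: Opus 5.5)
Your case (iii) is correct and is essentially the paper's Case 2: you restrict to $B'=R_{a^*}\cap B$ and to the elements whose traces are incomparable with that of $a^*$, and the extra fibre $R_{a^*}$ is what lowers $k$ to $k-1$. (You put the semi-equation condition on the fibres over $X$ rather than over $Y$ as the paper does. This is harmless, since sEH is symmetric in the two sides.) The genuine gap is the reduction to the dense case. You propose to pass to $\neg R$ when many traces have size below $|B|/2$. But the complement of a $(k,1)$-semi-equation is in general not a semi-equation of any kind: equality is a $(2,1)$-semi-equation, whereas $x\neq y$ on an infinite set is not a $(k',n')$-semi-equation for any $k',n'$. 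So neither the induction hypothesis nor your median analysis is available for $\neg R$, and there is no symmetry to appeal to. The sparse case needs its own idea, and it is the more delicate half of the paper's proof.

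In your notation, with $T_a:=R_a\cap B$, it goes as follows. Let $A^-:=\{a\in A: |T_a|\le |B|/6\}$, assumed to have size at least $|A|/2$, and let $B^-:=\bigcup_{a\in A^-}T_a$.
\begin{itemize}
\item If $|B^-|\le |B|/2$, then $R(A^-,B\setminus B^-)$ is anti-complete.
\item Otherwise the inclusion-maximal traces among $\{T_a: a\in A^-\}$ cover $B^-$, and each has size at most $|B^-|/3$. Adding them one at a time, you find $\tilde A$ whose traces have union $\tilde B$ with $|B^-|/3\le|\tilde B|\le 2|B^-|/3$.
\item Elements of $A^-$ whose traces lie inside $\tilde B$ are anti-complete to $B^-\setminus\tilde B$.
\item For the remaining elements $A_2$, the restriction of $R$ to $A_2\times\tilde B$ is a $(k-1,1)$-semi-equation. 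Indeed, a common point of $k-1$ of these traces lies in some $T_{a'}$ with $a'\in\tilde A$. Then $T_{a'}$ can be neither a superset of any of them (they are not contained in $\tilde B$) nor a proper subset of any of them (by maximality).
\end{itemize}
This balanced union of maximal traces is the missing idea. It is also why the threshold must be $1/6$ rather than $1/2$: the pieces have to be at most a third of the union.

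Your plan for case (i) would also fail as stated. A chain-or-antichain dichotomy in a poset of size $N$ only guarantees size about $\sqrt{N}$, not a constant fraction. Moreover, a large antichain does not reduce to the incomparable case: pairwise incomparable traces need not share a point, so there is no extra fibre against which to lower $k$.

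Fortunately, case (i) is not needed at all in the dense regime. Your median condition already gives at least $|A|/3$ elements whose traces are not contained in $T_{a^*}$. Each of these is either above $T_{a^*}$ (complete against $T_{a^*}$, which is large) or incomparable with it (your reduction). Choosing $T_{a^*}$ inclusion-minimal among the dense traces, as the paper does, is better still. It makes the elements below $a^*$ disappear altogether and loses only a factor of $4$ on the $A$-side rather than $12$, which is what keeps the constant at $1/6^{k-1}$.
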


It is easy to check that, given relations $R_1, R_2\subseteq X\times Y$ such that $R_i$ satisfies $\delta_i$-sEH for $i\in \{1,2\}$, we have that $\neg R_1$ satisfies $\delta_1$-sEH, while $R_1\wedge R_2$ and $R_1\vee R_2$ satisfy $(\delta_1\delta_2)$-sEH. Thus, any Boolean combination of 1-semi-equations satisfies $\delta$-sEH for an effective constant $\delta>0$ given by Theorem \ref{intromainthmsEH}. (By the above, this implies that 1-semi-equational theories do not interpret a field of prime characteristic, but this point is rendered moot by the previous results.)

These results strengthen the case made in \cite{chernikovmennen} that 1-semi-equationality is an appropriate generalisation of one-basedness, and hence a candidate notion of `linearity', in the NIP context: 1-semi-equational theories satisfy combinatorial notions of linearity that were shown to hold of one-based theories in \cite{onebased}, and do not interpret an infinite field.
\subsection{Acknowledgements}
The proof of Theorem \ref{mainthmsEH} is a generalisation of an unpublished proof of Amador Martin-Pizarro and Martin Ziegler that weakly normal relations have the strong Erd\H{o}s--Hajnal property, presented by Amador in a talk at the South and East of England Model Theory Network (SEEMOD) in March 2026; I am grateful to them for allowing me to generalise their argument here. I am also grateful to Amador and Julia Wolf for our subsequent discussions as I developed and refined the proof. Finally, I thank Julia for her mentorship and for her feedback on this paper.

\textit{Soli Deo gloria.}
\subsection{Funding statement}
The author is supported by Julia Wolf's Open Fellowship from the UK Engineering and Physical Sciences Research Council (EP/Z53352X/1).
\section{Preliminaries}
In this section, we set out some key definitions and notation. For $k\in\N^+$, we write $[k]:=\{1, 2, \dots, k\}$.
\subsection{Relations and $(k,n)$-semi-equations}
Let $R\subseteq X\times Y$ be a relation. For $y\in Y$, let $R_y:=\{x\in X: (x,y)\in R\}$; this is a \textit{fibre} of $R$. For $x\in X$ and $y\in Y$, write $R(x,y)$ to mean $(x,y)\in R$. For $A\subseteq X$ and $B\subseteq Y$, write $R(A,B):=R\cap (A\times B)$; say that $R(A,B)$ is \textit{complete} if $R(A,B)=A\times B$, \textit{anti-complete} if $R(A,B)=\emptyset$, and \textit{homogeneous} if it is complete or anti-complete.
\begin{defn}    
    For $k,n\in\N^+$, say that a relation $R\subseteq X\times Y$ is a \textit{$(k,n)$-semi-equation} if, for all $y_1, \dots, y_k\in Y$ such that $\bigcap_{i\in [k]} R_{y_i}\neq\emptyset$, there are pairwise distinct $i_1, \dots, i_n, j\in [k]$ such that $\bigcap_{m\in [n]} R_{y_{i_m}}\subseteq R_{y_j}$. For $n\in\N^+$, say that $R$ is an \textit{$n$-semi-equation} if it is a $(k,n)$-semi-equation for some $k\in\N^+$.

    For $k,n\in\N^+$, say that a (first-order) theory is \textit{$(k,n)$-semi-equational} (respectively, \textit{$n$-semi-equational}) if every formula $\varphi(x;y)$ is a Boolean combination of definable $(k,n)$-semi-equations (respectively, $n$-semi-equations) $\varphi_1(x;y), \dots, \varphi_l(x;y)$.
\end{defn}
In this paper, we will mainly be concerned with 1-semi-equations. Observe that $R$ is a $(k,1)$-semi-equation if, given any $k$ distinct fibres of $R$ with non-empty intersection, two of those fibres are such that one is contained in the other. When $k=2$, this says that any two fibres of $R$ are either disjoint or comparable by inclusion, and so the set of fibres of $R$ is a \textit{laminar} set system.

We make some observations about $(k,n)$-semi-equations.
\begin{remark}\label{rmksemieqn}
    Let $k,n\in\N^+$.
    \begin{enumerate}[(i)]
        \item If $k\leq n$, then $R$ is a $(k,n)$-semi-equation if and only if $R=\emptyset$. Indeed, if $R=\emptyset$, then there are no $y_1, \dots, y_k\in Y$ such that $\bigcap_{i\in [k]} R_{y_i}\neq\emptyset$, so $R$ is vacuously a $(k,n)$-semi-equation. Conversely, if there is $y\in Y$ such that $R_y\neq\emptyset$, set $y_1=\dots=y_k=y$. Then $\bigcap_{i\in [k]} R_{y_i}\neq\emptyset$, but there are no pairwise distinct $i_1, \dots, i_n, j\in [k]$ at all since $k\leq n$.
        \item If $k>n$ and $R$ is a $(k,n)$-semi-equation, then $R$ is a $(k',n')$-semi-equation for all $k',n'\in\N^+$ such that $k\leq k'$ and $n\leq n'<k'$. Indeed, given $y_1, \dots, y_{k'}\in Y$ such that $\bigcap_{i\in [k']} R_{y_i}\neq\emptyset$, by $(k,n)$-semi-equationality, there are pairwise distinct $i_1, \dots, i_n, j\in [k]$ such that $\bigcap_{m\in [n]} R_{y_{i_m}}\subseteq R_{y_j}$. Since $n'+1\leq k'$, we can choose pairwise distinct $i_{n+1}, \dots, i_{n'}\in [k']\setminus\{i_1, \dots, i_n, j\}$, and now $\bigcap_{m\in [n']} R_{y_{i_m}}\subseteq R_{y_j}$.
        \item Boolean combinations of 1-semi-equations need not be 1-semi-equations (in fact, they need not be $(k',n')$-semi-equations for any $k',n'\in\N^+$): consider the following examples.
        \begin{itemize}
            \item \textit{Negation.} Let $X$ be an infinite set and $R_=:=\{(x,x):x\in X\}$. Then $R_=$ is a $(2,1)$-semi-equation, but $\neg R:=R_{\neq}$ is not a $(k',n')$-semi-equation for any $k',n'\in\N^+$.
            \item \textit{Disjunction.} Let $(X,<)$ be an infinite linear order, $R_<:=\{(x,y)\in X^2:x<y\}$, and $R_>:=\{(x,y)\in X^2:x>y\}$. Then $R_<$ and $R_>$ are $(2,1)$-semi-equations, but $R_<\vee R_>=R_{\neq}$ is not a $(k',n')$-semi-equation for any $k',n'\in\N^+$.
            \item \textit{Conjunction.} Let $(X,<)$ be an infinite linear order with $0\not\in X$, $R_1:=R_<\cup\{(0,x), (x,x): x\in X\}$, and $R_2:=R_>\cup\{(0,x), (x,x): x\in X\}$. Then $R_1$ and $R_2$ are $(2,1)$-semi-equations, but $R_1\wedge R_2=\{(0,x), (x,x): x\in X\}$ is not a 1-semi-equation.
        \end{itemize}
        Thus, when showing that Boolean combinations of 1-semi-equations have some particular property, \textit{a priori}, one must consider all possible Boolean combinations.
    \end{enumerate}
\end{remark}
We will encounter the following relation several times throughout this paper.
\begin{defn}
    Let $\mathbb{F}$ be a field. Let $P$ and $L$ respectively denote the set of points and lines in $\mathbb{F}^2$. The \textit{point-line incidence relation (over $\mathbb{F}$)} is the relation
    \[I:=\{(p,l)\in P\times L: p\in l\}\subseteq P\times L.\]
\end{defn}
\begin{remark}\label{pointlineis(3,2)}
    Over any field $\mathbb{F}$, the point-line incidence relation is a $(3,2)$-semi-equation: if three distinct lines $l_1, l_2, l_3$ have non-empty intersection, then this intersection is a single point $p$, and so $l_1\cap l_2=\{p\}\subseteq l_3$.
\end{remark}

We conclude this subsection by describing a subclass of 1-semi-equations.
\begin{defn}
    For $k\in\N^+$, say that a relation $R\subseteq X\times Y$ is \textit{$k$-weakly normal} if, for all $y_1, \dots, y_k\in Y$ such that $\bigcap_{i\in [k]} R_{y_i}\neq\emptyset$, there are pairwise distinct $i,j\in [k]$ such that $R_{y_i}=R_{y_j}$. Say that $R$ is \textit{weakly normal} if it is $k$-weakly normal for some $k\in\N^+$.
\end{defn}
Recall that a relation $R\subseteq X\times Y$ is \textit{stable} if there do not exist $(x_i: i\in \N)$ from $X$ and $(y_j:j\in \N)$ from $Y$ such that, for all $i,j\in\N$, $(x_i, y_j)\in R$ if and only if $i<j$.
\patchcmd{\thmhead}{(#3)}{#3}{}{}
\begin{fact}[{\cite[Proposition 2.19]{chernikovmennen}}]
    A relation $R\subseteq X\times Y$ is weakly normal if and only if it is a stable 1-semi-equation.
\end{fact}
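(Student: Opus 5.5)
The plan is to prove the two directions separately. Stability will be taken in the sense used in the model-theoretic setting of \cite{chernikovmennen}: $R$ is a definable relation in a sufficiently saturated structure. By compactness, the absence of an infinite ladder is then equivalent to a uniform bound $N$ on the length of finite ladders $x_1,\dots,x_N$, $y_1,\dots,y_N$ with $R(x_i,y_j)\iff i<j$. Some such convention is genuinely needed. Indeed, for an arbitrary abstract relation, a disjoint union of finite ladders of unbounded length has pairwise nested-or-disjoint fibres, so it is a $(2,1)$-semi-equation. It has no infinite ladder, yet it is not weakly normal.

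\emph{Weakly normal $\Rightarrow$ stable 1-semi-equation.} Suppose $R$ is $k$-weakly normal. Given $y_1,\dots,y_k$ with $\bigcap_i R_{y_i}\neq\emptyset$, weak normality gives $i\neq j$ with $R_{y_i}=R_{y_j}$, and in particular $R_{y_i}\subseteq R_{y_j}$. So $R$ is a $(k,1)$-semi-equation.

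For stability, suppose there is a ladder of length $k+1$. Consider $y_1,\dots,y_k$. Every $R_{y_j}$ with $j\geq 1$ contains $x_0$, so their intersection is non-empty. The fibres are pairwise distinct: for $i<j$ we have $x_i\in R_{y_j}\setminus R_{y_i}$. This contradicts $k$-weak normality. Hence ladder lengths are bounded by $k$, and $R$ is stable.

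\emph{Stable $(k,1)$-semi-equation $\Rightarrow$ weakly normal.} Let $N$ bound the length of ladders, and suppose $R$ is not $m$-weakly normal for a large $m$ to be chosen. Then there are $y_1,\dots,y_m$ with a common point in their fibres and with the fibres $R_{y_i}$ pairwise distinct.

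Colour each pair $i<j$ by one of three colours:
\begin{itemize}
\item $R_{y_i}\subsetneq R_{y_j}$;
\item $R_{y_i}\supsetneq R_{y_j}$;
\item $R_{y_i}$ and $R_{y_j}$ incomparable.
\end{itemize}
By finite Ramsey, if $m\geq r(\max(k,N+1))$ for the three-colour Ramsey number, there is a monochromatic subset of size $\max(k,N+1)$.

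The incomparable colour is impossible. Any $k$ of these fibres are distinct and have a common point, so by $(k,1)$-semi-equationality two of them are comparable. In the other two cases, after reindexing, we obtain a strictly increasing chain $R_{y_1}\subsetneq\dots\subsetneq R_{y_{N+1}}$. Choose $x_i\in R_{y_{i+1}}\setminus R_{y_i}$. Then $x_i\in R_{y_j}$ for $j>i$, by monotonicity of the chain. Also $x_i\notin R_{y_j}$ for $j\leq i$, since $R_{y_j}\subseteq R_{y_i}$. This is a ladder of length $N$ (up to an index shift), which contradicts the bound once the constants are arranged with one to spare. Hence $R$ is $m$-weakly normal.

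The only delicate point is this interpretation of stability via compactness. The combinatorial core, namely Ramsey plus extracting a ladder from a chain of fibres, is routine.
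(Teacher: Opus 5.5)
Your argument is correct. The paper gives no proof of this Fact: it is imported from \cite[Proposition 2.19]{chernikovmennen} and used only as motivation, so there is no in-paper argument to compare yours against. Two remarks.

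First, your caveat about stability is right, and it is in fact a correction to the paper's wording. Take the definition the paper recalls (no \emph{infinite} ladder), applied to an arbitrary relation. Your disjoint union of finite half-graphs of unbounded length is then a $(2,1)$-semi-equation with no infinite ladder. (An infinite ladder would force every $x_i$, $i\geq 0$, into the single finite block containing $x_0$ and all the $y_j$ with $j\geq 1$.) Yet it is not weakly normal. So the Fact must be read with stability meaning a uniform finite bound on ladder length, which is automatic by compactness for definable relations in a saturated model, as in the source. Your forward direction needs no such convention, since it yields the uniform bound $k$ outright.

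Second, the Ramsey step is heavier than necessary. The pairwise distinct fibres through a common point form a poset of width at most $k-1$; this is exactly the observation used in the proof of Proposition \ref{onenegzbound}. By Dilworth's theorem they therefore split into at most $k-1$ chains. Your chain-to-ladder extraction then caps each chain at $N+1$ elements if ladders have length at most $N$. This shows directly that $R$ is $((k-1)(N+1)+1)$-weakly normal, an explicit bound in place of a three-colour Ramsey number.
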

\patchcmd{\thmhead}{#3}{(#3)}{}{}
\subsection{Partial orders}
We recall some basic definitions and facts concerning partial orders. A \textit{poset} is a partially ordered set.
\begin{defn}
    Let $(\mathcal{P},\leq)$ be a poset. Say that $P\in\mathcal{P}$ is \textit{minimal} (respectively, \textit{maximal}) if there does not exist $P'\in\mathcal{P}\setminus \{P\}$ such that $P'\leq P$ (respectively, $P\leq P'$).
    
    Say that $P_1, P_2\in \mathcal{P}$ are \textit{comparable} if $P_1\leq P_2$ or $P_2\leq P_1$, and \textit{incomparable} otherwise. A \textit{chain} (respectively, \textit{antichain}) in $\mathcal{P}$ is a subset $\mathcal{C}\subseteq \mathcal{P}$ such that any two elements of $\mathcal{C}$ are comparable (respectively, incomparable). If $\mathcal{P}$ is finite, define the \textit{width} of $\mathcal{P}$ to be the size of the largest antichain in $\mathcal{P}$.
\end{defn}
\begin{fact}[Dilworth's theorem \cite{dilworth}]
    Let $(\mathcal{P},\leq)$ be a finite poset. For $k\in\N^+$, if $\mathcal{P}$ has width $k$, then $\mathcal{P}$ is the union of (at most) $k$ chains.
\end{fact}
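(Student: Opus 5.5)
The statement to prove is Dilworth's theorem: a finite poset of width $k$ is a union of at most $k$ chains. I would argue by strong induction on $|\mathcal{P}|$. The empty poset is trivial.

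For the inductive step, let $\mathcal{P}$ be a nonempty finite poset of width $k$. Pick a maximal chain $\mathcal{C}$ in $\mathcal{P}$; it exists by finiteness. If $\mathcal{P}\setminus\mathcal{C}$ has width at most $k-1$, then by induction it is a union of at most $k-1$ chains. Adding $\mathcal{C}$ finishes this case.

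Otherwise $\mathcal{P}\setminus\mathcal{C}$ contains an antichain $\mathcal{A}=\{a_1,\dots,a_k\}$ of size exactly $k$. This $\mathcal{A}$ is then a maximum antichain of $\mathcal{P}$. Define
\[\mathcal{P}^-:=\{p\in\mathcal{P}: p\leq a_i \text{ for some } i\},\qquad \mathcal{P}^+:=\{p\in\mathcal{P}: a_i\leq p \text{ for some } i\}.\]

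Step 1: these sets cover $\mathcal{P}$. Any element comparable to no $a_i$ could be added to $\mathcal{A}$, giving an antichain of size $k+1$, a contradiction. Also $\mathcal{P}^-\cap\mathcal{P}^+=\mathcal{A}$, since $a_i\leq p\leq a_j$ forces $i=j$ and $p=a_i$.

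Step 2: both sets are proper subsets of $\mathcal{P}$. The maximal element $\max\mathcal{C}$ is maximal in $\mathcal{P}$, because $\mathcal{C}$ is a maximal chain. It is not in $\mathcal{A}$, since $\mathcal{A}$ is disjoint from $\mathcal{C}$. If $\max\mathcal{C}\leq a_i$ held, maximality would give $\max\mathcal{C}=a_i\in\mathcal{A}$, which is impossible. Hence $\max\mathcal{C}\notin\mathcal{P}^-$. Symmetrically, $\min\mathcal{C}\notin\mathcal{P}^+$.

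Step 3: apply induction. Both $\mathcal{P}^-$ and $\mathcal{P}^+$ have width exactly $k$, because they contain $\mathcal{A}$ and sit inside $\mathcal{P}$. By the induction hypothesis each is a union of $k$ chains. Each $a_i$ lies in exactly one chain of each decomposition, since distinct $a_i$ are incomparable and so cannot share a chain. So we may label the chains $\mathcal{C}^-_i\ni a_i$ and $\mathcal{C}^+_i\ni a_i$.

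Step 4: glue the chains. In $\mathcal{P}^-$ the element $a_i$ is the top of $\mathcal{C}^-_i$: an element $p\in\mathcal{C}^-_i$ with $p>a_i$ would satisfy $a_i<p\leq a_j$ for some $j$, and then $j\neq i$, contradicting that $\mathcal{A}$ is an antichain. Dually, $a_i$ is the bottom of $\mathcal{C}^+_i$. Therefore $\mathcal{C}^-_i\cup\mathcal{C}^+_i$ is a chain. These $k$ chains cover $\mathcal{P}^-\cup\mathcal{P}^+=\mathcal{P}$.

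The main obstacle is Step 2, the properness of $\mathcal{P}^\pm$, which is exactly what makes the induction apply. The device that secures it is choosing the antichain inside $\mathcal{P}\setminus\mathcal{C}$ for a maximal chain $\mathcal{C}$.

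Finally, "at most" $k$ chains holds trivially, since chains may be empty or repeated.
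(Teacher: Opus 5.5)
Your proof is correct and complete. It is the classical short inductive proof of Dilworth's theorem. Removing a maximal chain $\mathcal{C}$ either lowers the width, or else $\mathcal{P}\setminus\mathcal{C}$ contains a maximum antichain $\mathcal{A}$. In that case the extremes of $\mathcal{C}$ show that both the down-closure and up-closure of $\mathcal{A}$ are proper, so induction applies to each, and the resulting chains are glued at the elements of $\mathcal{A}$.

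The paper does not prove this statement at all. It quotes Dilworth's theorem as a classical fact with a citation and uses it as a black box, always in the form ``width at most $k-1$ implies a cover by at most $k-1$ chains.'' Your argument delivers exactly that, since it proves the bound for every width, including $0$ via the empty case. So the only difference is self-containment versus citation; nothing in the paper depends on how the theorem is proved.

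One step deserves an extra sentence. The induction hypothesis gives \emph{at most} $k$ chains covering $\mathcal{P}^-$, and incomparability only shows that each chain contains at most one $a_i$. What forces exactly $k$ chains, each containing exactly one $a_i$, is the count: there are $k$ pairwise incomparable $a_i$, each lying in some chain, and at most $k$ chains. This bijection is what makes the labelling $\mathcal{C}^-_i\ni a_i$ and $\mathcal{C}^+_i\ni a_i$ well defined. It also ensures the glued family $\{\mathcal{C}^-_i\cup\mathcal{C}^+_i\}$ has only $k$ members and leaves no chain without an $a_i$ to glue along.
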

\subsection{Asymptotics}
Let $f(x,y),g(x,y): D\times E\to\R_{\geq 0}$, where $D,E$ are sets and $x,y$ are tuples of variables. Write $f(x,y)=O_x(g(x,y))$ and $g(x,y)=\Omega_x (f(x,y))$ if there is $C=C(x): D\to \R^+$ such that $f(x,y)\leq C g(x,y)$ for all $x\in D$ and $y\in E$.
\section{Almost linear Zarankiewicz bounds}
In this section, we prove our main result, Theorem \ref{mainthm}, that Boolean combinations of 1-semi-equations have almost linear Zarankiewicz bounds.

The \textit{Zarankiewicz problem} is the following classical problem in extremal combinatorics. Let $R(A,B)$ be a bipartite graph on vertex sets $A$ and $B$, with $|A|=|B|=n$, such that $R(A,B)$ is $K_{t,t}$-free: that is, it does not contain $K_{t,t}$, the complete $t$-by-$t$ bipartite graph, as an induced copy. Find a (good) upper bound for $|R(A,B)|$. This motivates the following definition.
\begin{defn}
    Let $t,m,n\in\N^+$. For $R\subseteq X\times Y$, we write
    \[z_t^R(m,n):=\max\left\lbrace |R(A,B)|: A\subseteq X, B\subseteq Y, |A|=m, |B|=n, R(A,B)\text{ is }K_{t,t}\text{-free}\right\rbrace.\]
    We also write
    \[z_t(m,n):=\max\left\lbrace z_t^R(m,n): R\subseteq \N\times \N\right\rbrace.\]
\end{defn}

The Zarankiewicz problem thus asks for an upper bound on $z_t(n,n)$. The best-known bound, due to K\H{o}v\'ari, S\'os, and Tur\'an \cite{kovarisosturan}, is $O_t(n^{2-1/t})$; optimality of this bound has only been established for $t\in\{2,3\}$, and is otherwise open.

We are interested in the tameness properties of those theories in which every definable binary relation $R$ is such that $z_t^R(n,n)$ is much smaller than $z_t(n,n)$. How much smaller? Any upper bound on $z_t^R(n,n)$ for all definable binary relations $R$ must be at least linear, since the equality relation $=$ is such that $z_t^=(n,n)=n$. This justifies the following definition.
\begin{defn}
    Let $R\subseteq X\times Y$. A function $f: (\N^+)^3\to\R_{\geq 0}$ is a \textit{Zarankiewicz bound} for $R$ if $z_t^R(m,n)\leq f(t,m,n)$ for all $t,m,n\in\N^+$. Say that $R$ has \textit{almost linear Zarankiewicz bounds} if, for all $t\in\N^+$ and $\varepsilon\in\R^+$,
    \[z_t^R(n,n)=O_{t,\varepsilon}(n^{1+\varepsilon}).\]
    Say that a theory has \textit{almost linear Zarankiewicz bounds} if every definable binary relation (that is, every relation defined by some formula $\phi(x;y)$ where $x, y$ are tuples of variables) does.
\end{defn}
The following fact follows from the proofs of \cite[Corollary 5.11]{semilinearzarankiewicz} and \cite[Corollary 6.3]{distalregularitylemma}, and a full argument is given in \cite[Proposition 4.2]{onebased}.
\begin{fact}\label{ALZBnofield}
    A theory with almost linear Zarankiewicz bounds does not interpret an infinite field.
\end{fact}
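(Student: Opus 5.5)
The plan is to argue by contraposition. Suppose $T$ interprets an infinite field $\mathbb{F}$. We will produce a binary relation, definable in $T$ (with parameters, passing to $T^{\mathrm{eq}}$ if needed), that fails almost linear Zarankiewicz bounds. First, we need to transfer the hypothesis to interpretable sets. An interpretable binary relation $I\subseteq (D_1/E_1)\times (D_2/E_2)$ pulls back to a definable relation $\tilde I\subseteq D_1\times D_2$. Given finite $A,B$ in the quotients, we choose one representative for each class. This gives sets $\tilde A,\tilde B$ with $|\tilde I(\tilde A,\tilde B)|=|I(A,B)|$, and $\tilde I(\tilde A,\tilde B)$ is $K_{t,t}$-free whenever $I(A,B)$ is. Hence $z^I_t\le z^{\tilde I}_t$. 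Parameters are harmless, since $\phi(x;y,c)$ with $c$ fixed is a fibre of the definable relation $\phi(x;(y,z))$ restricted to $z=c$, and restricting to a subset cannot increase $z_t$.

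Next, inside $\mathbb{F}$ we consider the point-line incidence relation $I$ on $\mathbb{F}^2\times\mathbb{F}^2$, with lines coded by $(a,b)$ as $y=ax+b$. This is interpretable, so by the previous step it has almost linear Zarankiewicz bounds. Any two distinct lines meet in at most one point, so $I$ is $K_{2,2}$-free. It therefore suffices to exhibit, for some $\varepsilon>0$, point and line sets of size $n$ with $\Omega(n^{1+\varepsilon})$ incidences, for infinitely many $n$.

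The field has either characteristic $0$ or characteristic $p$. In characteristic $0$, $\mathbb{Q}\subseteq\mathbb{F}$, and the standard grid construction applies. Take points $[N]\times[2N^2]$ and lines $y=ax+b$ with $a\in[N]$, $b\in[N^2]$. Each line contains $N$ of the points, giving $\Theta(n^{4/3})$ incidences with $n\asymp N^3$. This is the Szemerédi--Trotter lower bound, and it works over $\mathbb{Q}$ with no wraparound.

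In characteristic $p$, the field $\mathbb{F}$ is infinite, so it contains finite subfields of unbounded size, or at least arbitrarily large finite additive/multiplicative structure. This is where I expect the main difficulty. If $\mathbb{F}$ contains $\mathbb{F}_q$ for arbitrarily large $q$, take all $q^2$ points and $q^2$ lines of $\mathbb{F}_q^2$, giving $q^3=n^{3/2}$ incidences. If instead $\mathbb{F}$ contains only a bounded finite field, it contains a transcendental $\tau$ over $\mathbb{F}_p$. Then I would use the subfield $\mathbb{F}_p(\tau)$, or $\mathbb{F}_p[\tau]$-grids with polynomials of bounded degree. Take points $(a,c)$ with $\deg a<m$ and $\deg c<3m$ (roughly), and lines with slope degree $<m$ and intercept degree $<2m$. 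Degree sums do not overflow, so the grid count goes through exactly as over $\mathbb{Z}$, again giving $\Omega(n^{4/3})$.

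Either way, $z^I_2(n,n)\ne O(n^{1+\varepsilon})$ for $\varepsilon<1/3$. This contradicts the hypothesis, as transferred by the first step.
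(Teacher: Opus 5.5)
Your argument is correct up to one slip (below). In positive characteristic it takes a genuinely different route from the one the paper relies on.

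The paper does not prove this Fact itself; it defers to the literature. As the introduction indicates, the cited argument uses the point--line incidence relations over $\Q$ and over $\mathbb{F}_p^{\text{alg}}$. Characteristic $0$ goes exactly as in your proposal: $\Q\subseteq\mathbb{F}$ plus the Erd\H{o}s grid.

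In characteristic $p$, though, an infinite field need not contain $\mathbb{F}_p^{\text{alg}}$, so the cited route needs model-theoretic input. First, almost linear Zarankiewicz bounds force NIP: a formula with the independence property realises every finite bipartite graph, including the $K_{2,2}$-free incidence graphs of $\mathbb{F}_q^2$ with $n^{3/2}$ edges. Then the Kaplan--Scanlon--Wagner theorem places $\mathbb{F}_p^{\text{alg}}$ inside any infinite NIP field of characteristic $p$.

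You avoid both NIP and Kaplan--Scanlon--Wagner by splitting into two cases. If $\mathbb{F}$ is algebraic over $\mathbb{F}_p$, it has arbitrarily large finite subfields. Otherwise it contains a transcendental $\tau$, and you use polynomial grids in $\mathbb{F}_p[\tau]$, where the absence of carries makes the count even cleaner than over $\mathbb{Z}$.

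What each route buys:
\begin{itemize}
\item Your version is self-contained. It shows directly that point--line incidence over \emph{every} infinite field fails almost linear Zarankiewicz bounds, with no input beyond pulling interpretable relations back to definable ones.
\item The cited version is shorter given the literature and gives exponent $3/2$ throughout positive characteristic.
\item The cited version is also the same reduction that handles the strong Erd\H{o}s--Hajnal analogue, where edge density plays no role.
\end{itemize}

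The slip is in the $\mathbb{F}_p[\tau]$ grid: the second coordinate of the points should range over polynomials of degree $<2m$, not $<3m$. Since $\deg(sx+b)\leq\max(\deg s+\deg x,\deg b)<2m$, this gives $p^{3m}$ points, $p^{3m}$ lines and $p^{4m}=n^{4/3}$ incidences. With degree $<3m$ as written, you would have $p^{4m}$ points but still only $p^{4m}$ incidences. After padding the line set to equal size, that count is linear in $n$ and proves nothing.
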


In \cite{chernikovmennen}, Chernikov and Mennen prove that Boolean combinations of $(2,1)$-semi-equations have almost linear Zarankiewicz bounds, and so $(2,1)$-semi-equational theories do not interpret an infinite field. In \cite{onebased}, Chernikov and Starchenko prove \textit{linear} Zarankiewicz bounds for Boolean combinations of weakly normal relations, that is, $z_t^R(n,n)=O_t(n)$ for each such Boolean combination $R$. (This implies that stable, one-based theories do not interpret an infinite field, but this was already known \cite[Corollary 4.8]{anandbook}.) 

Both sets of authors ask if the result can be extended to $(k,1)$-semi-equational theories. We answer this affirmatively. More specifically, we show the following. (Henceforth, all logarithms are natural unless otherwise specified.)
\begin{theorem}\label{mainthm}
    Let $k\in\N^+$. Let $R^{(1)}, \dots, R^{(d)}, R^{(-1)}, \dots, R^{(-d')}\subseteq X\times Y$ be $(k,1)$-semi-equations, and let $R=R^{(1)}\wedge \dots\wedge R^{(d)}\wedge \neg R^{(-1)}\wedge \dots \wedge \neg R^{(-d')}$. Then
    \[z_t^R(m,n)\leq\begin{cases}
        (t-1)(m+n)&\text{if }d'=d=0,\\
        8^{d-1}k^{4d-3}(t-1)^2(m+n)\log^{2d-2}(m+1)&\text{if }d'=0\text{ and }d>0,\\
        8^{d+d'-1}k^{4d+d'}(d'!)t^{d'+1}(m+n)\log^{2d+d'-1}(m+1)&\text{if }d'>0
    \end{cases}\]
    for all $t,m,n\in\N^+$. Thus, any Boolean combination of $(k,1)$-semi-equations has almost linear Zarankiewicz bounds, and any $(k,1)$-semi-equational theory does not interpret an infinite field.
\end{theorem}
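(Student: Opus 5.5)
We induct on the number $d+d'$ of constituent relations. The base case is $d=d'=0$, where $R=X\times Y$: a $K_{t,t}$-free complete bipartite graph has $\min(m,n)\leq t-1$, so $|R(A,B)|\leq (t-1)\max(m,n)\leq (t-1)(m+n)$. The logarithmic factors will come from a divide-and-conquer step that peels off one $(k,1)$-semi-equation at a time. At each such step we pay a factor of order $\log(m+1)$ for each level of a recursion that halves $|A|$, and a second logarithmic factor for the structure of the fibre poset.

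The key structural input concerns a single $(k,1)$-semi-equation $S=R^{(i)}$, restricted to finite $A\times B$. Consider the poset of distinct fibres $\{S_y\cap A: y\in B\}$ ordered by inclusion. By the $(k,1)$ hypothesis, for each $x\in A$ the set of fibres containing $x$ has no antichain of size $k$. Dilworth's theorem then splits the fibres through $x$ into at most $k-1$ chains. Using this, we build a partition-type decomposition of $S(A,B)$.

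First we sort $A$ linearly; we try a depth-first or interval ordering of the fibre poset so that each chain-of-fibres through a point corresponds to a nested family. The goal is to express $S(A,B)$ as a union of $O(k\log m)$ "combinatorial rectangles/intervals" levels. Concretely, we aim for a statement of the form
\[S(A,B)=\bigcup_{j}A_j\times B_j\quad\text{(edge-disjoint)},\qquad \sum_j(|A_j|+|B_j|)\leq C k^{c}(m+n)\log(m+1).\]
We get this by recursively splitting the fibre poset at a median element, so that each point lies in $O(k\log m)$ pieces.

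Given such a decomposition for $S=R^{(d)}$, we restrict the remaining conjunction $R'$ to each block $A_j\times B_j$. On that block $R=R'$, and $K_{t,t}$-freeness is inherited. Applying the inductive bound on each block and summing gives
\[|R(A,B)|\leq\sum_j f_{d-1}(|A_j|+|B_j|)\cdot\text{polylog},\]
which multiplies the bound by about $k^4\log^2(m+1)$. This matches the factor $8k^4\log^2$ per positive conjunct in the statement. For the first positive conjunct we instead use that a complete block can contribute only $(t-1)(|A_j|+|B_j|)$ edges; this explains why that conjunct costs nothing beyond the base and yields the exponent $2d-2$.

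Negated conjuncts $\neg S$ are handled differently. On each block the complement of $S$ restricted to the rectangle is not a rectangle union. Instead, we use the chain structure: for $x\in A$, its non-neighbours under $S$ are determined by $\leq k-1$ chains. One $\log$ factor comes from a dyadic decomposition, and the factor $t$ with $d'!$ comes from needing to exclude $K_{t,t}$ in each of the $d'$ levels; each level lowers the available $t$ budget.

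The main obstacle is the single-relation decomposition lemma for $k>2$. For laminar ($k=2$) systems the tree structure gives it immediately. For general $k$, the fibre poset is only locally of width $<k$, so we cannot apply Dilworth globally. We would first try to prove that the poset restricted to fibres meeting any fixed point has width $<k$, and build a recursive separator. The separator would be a fibre $F$ chosen so that the fibres incomparable to $F$ and meeting it number at most $k-2$ chains; we would then recurse on $F$ and on its complement, with the induction on $k$ absorbing the incomparable ones.

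Finally, Fact \ref{ALZBnofield} turns the bound into the non-interpretability statement.
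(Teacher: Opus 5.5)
The overall architecture matches the paper's: peel off one $(k,1)$-semi-equation at a time, cut it into boxes, apply the inductive bound on each box and sum. But there is a genuine gap exactly where you place the ``main obstacle'': the single-relation decomposition is never established, and your separator recursion does not give it.

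\textbf{The missing decomposition.} Dilworth's theorem applies only to fibres through a common point. So you need a way to split $B$ into groups on which the fibre poset has \emph{global} width at most $k-1$, while keeping the total size of the $A$-sides at $O_k(m)$. Your sketch achieves neither:
\begin{enumerate}[(i)]
\item There is no notion of ``median'' in a poset of unbounded width, and a fibre splitting $A$ in a balanced way need not exist.
\item The fibres incomparable to a separator $F$ and meeting it need not form $k-2$ chains; they can be pairwise disjoint. What is true is only that their traces on $F$ form a $(k-1,1)$-semi-equation.
\item Each such $b$ is passed both to the piece inside $F$ and to the piece outside $F$. So nothing bounds the number of pieces containing a given $b$ by $O(k\log m)$.
\end{enumerate}

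The device the paper uses (Proposition~\ref{conjunctions}) is as follows.
\begin{enumerate}[(i)]
\item Bucket $B$ by $|R_b|$ with ratio $k/(k-1)$, giving $O(k\log m)$ buckets, and take the minimal fibres in each bucket.
\item By semi-equationality and Lemma~\ref{unionbound}, every fibre in a bucket contains between $1$ and $k-1$ minimal fibres. Containing $k$ pairwise incomparable ones would push its size out of the bucket.
\item Group $B$ by the set $\mathcal{D}$ of minimal fibres contained. All fibres in a group then contain a common $C_0\in\mathcal{D}$, so the group has global width at most $k-1$.
\item Apply Dilworth and Lemma~\ref{chainlemma} inside $A^{s,\mathcal{D}}\times B^{s,\mathcal{D}}$.
\item Bound $\sum_{\mathcal{D}}|A^{s,\mathcal{D}}|\le(k-1)^2m$ by showing that no point lies in $k$ of the sets $A^{s,\mathcal{D}}$ with $|\mathcal{D}|$ fixed.
\end{enumerate}
The two logarithms per positive conjunct come from the buckets and from the chain lemma. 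Your one-log decomposition is unproven. It also contradicts your own next paragraph, where the step costs $k^4\log^2(m+1)$.

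\textbf{Negated conjuncts.} Dilworth controls the $y$ that are $S$-neighbours of a common point, not the non-neighbours. Applying the chain lemma to $\neg S$ on a set $B'$ requires $\{S_y:y\in B'\}$ to split into few global chains. The paper's trick (Proposition~\ref{manynegszbound}) is to fix distinct $a_1,\dots,a_t\in A$.
\begin{enumerate}[(i)]
\item By $K_{t,t}$-freeness, at most $t-1$ elements $b$ satisfy $R(a_s,b)$ for all $s$.
\item Every other $b$ lies in some $B_{e,s}=\{b: R^{(e)}(a_s,b)\}$. On $B_{e,s}$ the fibres of $R^{(e)}$, and hence of $\neg R^{(e)}$, split into $k-1$ chains.
\end{enumerate}
The factor $(d'!)t^{d'+1}$ comes from summing over the $jt$ pairs $(e,s)$ when there are $j$ negated conjuncts, not from lowering $t$. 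Since the trick needs every conjunct to be a negated semi-equation, the negations must be handled before any positive conjunct is added.

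\textbf{The first positive conjunct.} Bounding each complete $K_{t,t}$-free box by $(t-1)(|A_j|+|B_j|)$ just reproduces the logarithm of your decomposition, so it cannot give the exponent $2d-2$. The paper gets a linear bound (Proposition~\ref{(k,1)zbound}) by a separate argument:
\begin{enumerate}[(i)]
\item Restrict to fibres of size at least $t$; each is repeated at most $t-1$ times.
\item Peel off successive layers of maximal fibres. Each layer has empty $k$-fold intersections, so carries at most $(k-1)m$ edges.
\item $K_{t,t}$-freeness forces fewer than $t$ layers.
\end{enumerate}

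Finally, passing to arbitrary Boolean combinations needs disjunctive normal form and Lemma~\ref{disjunctions}.
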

The broad structure of our proof follows that in \cite{onebased}. Our goal is to establish almost linear Zarankiewicz bounds (of a specific form) for Boolean combinations of $(k,1)$-semi-equations; thus, by considering disjunctive normal form, it suffices to prove that:
\begin{enumerate}[(i)]
    \item if $R$ and $R'$ have almost linear Zarankiewicz bounds, so does $R\vee R'$;
    \item if $R^{(1)}, \dots, R^{(d)}$ are $(k,1)$-semi-equations, then $\neg R^{(1)}\wedge \dots \wedge \neg R^{(d)}$ has almost linear Zarankiewicz bounds;
    \item if $R$ is a $(k,1)$-semi-equation and $R'$ has almost linear Zarankiewicz bounds, then $R\wedge R'$ has almost linear Zarankiewicz bounds.
\end{enumerate}

The first of these statements is very easy to prove. The following lemma is essentially a restatement of \cite[Lemma 4.9]{onebased}, but we include an argument for completeness.

\begin{lemma}\label{disjunctions}
    For relations $R, R'\subseteq X\times Y$, 
    \[z_t^{R\vee R'}(m,n)\leq z_t^R(m,n)+z_t^{R'}(m,n)\]
    for all $t,m,n\in\N^+$.
\end{lemma}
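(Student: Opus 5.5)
The plan is to fix finite sets $A\subseteq X$ and $B\subseteq Y$ with $|A|=m$ and $|B|=n$ such that $(R\vee R')(A,B)$ is $K_{t,t}$-free. Since the witnesses $A,B$ vary in the maximum defining $z_t^{R\vee R'}(m,n)$, it suffices to bound $|(R\vee R')(A,B)|$ by $z_t^R(m,n)+z_t^{R'}(m,n)$ for each such pair.

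The key observation is monotonicity of $K_{t,t}$-freeness under passing to sub-relations. Since $R\subseteq R\vee R'$, any $A_0\subseteq A$, $B_0\subseteq B$ with $|A_0|=|B_0|=t$ and $A_0\times B_0\subseteq R$ would also satisfy $A_0\times B_0\subseteq R\vee R'$. This contradicts the assumption, so $R(A,B)$ is $K_{t,t}$-free, and likewise $R'(A,B)$ is $K_{t,t}$-free. Here ``contains $K_{t,t}$'' means that some $t$-by-$t$ rectangle lies entirely in the relation. This is the notion used in the definition of $z_t^R$, even though the paper says ``induced''; in the bipartite setting the two coincide.

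Hence, by the definition of $z_t^R$ and $z_t^{R'}$ (the same $A,B$ are admissible witnesses in both maxima), we get $|R(A,B)|\le z_t^R(m,n)$ and $|R'(A,B)|\le z_t^{R'}(m,n)$. Finally, $(R\vee R')(A,B)=R(A,B)\cup R'(A,B)$, so the union bound gives
\[|(R\vee R')(A,B)|\le |R(A,B)|+|R'(A,B)|\le z_t^R(m,n)+z_t^{R'}(m,n).\]
Taking the maximum over all admissible $A,B$ yields the claim.

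No step here is a real obstacle. The only point needing care is that $K_{t,t}$-freeness is inherited by subrelations on the same vertex sets, which holds because containing $K_{t,t}$ is a monotone (upward-closed) property of the edge set.
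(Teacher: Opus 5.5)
Your proof is correct and matches the paper's argument: you fix $A,B$ with $(R\vee R')(A,B)$ $K_{t,t}$-free, note that $R(A,B)$ and $R'(A,B)$ inherit $K_{t,t}$-freeness, and apply the union bound. Your remark on ``induced'' copies is also right: for $t$-by-$t$ rectangles respecting the bipartition, induced and non-induced containment are the same thing.
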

\begin{proof}
    Let $A\subseteq X$ and $B\subseteq Y$ with $m:=|A|$ and $n:=|B|$. If $(R\vee R')(A,B)$ is $K_{t,t}$-free, then so are $R(A,B)$ and $R'(A,B)$, and hence
    \[|(R\vee R')(A,B)|\leq |R(A,B)|+|R'(A,B)|\leq z_t^R(m,n)+z_t^{R'}(m,n).\qedhere\]
\end{proof}
Let us now consider conjunctions of negated $(k,1)$-semi-equations. We begin with the empty conjunction, that is, the full relation. The following lemma is essentially a restatement of \cite[Lemma 4.7]{onebased}, but we include an argument for completeness.
\begin{lemma}\label{fullrelation}
    Let $R\subseteq X\times Y$ be the full relation (that is, $R=X\times Y$). Then
    \[z_t^R(m,n)\leq (t-1)(m+n).\]
\end{lemma}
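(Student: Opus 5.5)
We want: if $A\subseteq X$, $B\subseteq Y$ with $|A|=m$, $|B|=n$, and $A\times B$ is $K_{t,t}$-free (here $R(A,B)=A\times B$), then $|A\times B| = mn\leq (t-1)(m+n)$. The plan is a direct case analysis on the sizes of $A$ and $B$, with no need for any earlier result.

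\emph{Step 1: one side is small.} Since $R(A,B)=A\times B$ is a complete bipartite graph, it contains a copy of $K_{t,t}$ as soon as $m\geq t$ and $n\geq t$: choose any $t$ elements of $A$ and any $t$ elements of $B$. So $K_{t,t}$-freeness forces $m\leq t-1$ or $n\leq t-1$.

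\emph{Step 2: the bound.} If $m\leq t-1$, then $mn\leq (t-1)n\leq (t-1)(m+n)$. Symmetrically, if $n\leq t-1$, then $mn\leq (t-1)m\leq (t-1)(m+n)$.

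\emph{Step 3: conclude.} Taking the maximum over all admissible $A,B$ gives $z_t^R(m,n)\leq (t-1)(m+n)$. If $|X|<m$ or $|Y|<n$, the set in the definition is empty; with the convention that the maximum of the empty set is $0$, the bound is trivial.

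There is no real obstacle. The only points needing care are the convention for the empty maximum and the case $t=1$. For $t=1$, $K_{1,1}$-freeness means $A\times B$ has no edges, which is impossible when $m,n\geq 1$; so the set is empty and the bound $0$ holds.
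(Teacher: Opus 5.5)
Your proposal is correct and is essentially the paper's own argument: $K_{t,t}$-freeness of the complete bipartite graph $A\times B$ forces $|A|<t$ or $|B|<t$, which gives $mn\leq (t-1)(m+n)$. Your remarks on the empty maximum and the case $t=1$ are harmless extra care that the paper leaves implicit.
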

\begin{proof}
    If $A\subseteq X$ and $B\subseteq Y$ are such that $R(A,B)$ is $K_{t,t}$-free, then either $|A|<t$ or $|B|<t$.
\end{proof}
Next, we consider a single negated $(k,1)$-semi-equation. For the purposes of proving that conjunctions of negated $(k,1)$-semi-equations have almost linear Zarankiewicz bounds, it is not necessary to consider this case separately, but this will allow us to derive better bounds (see Remark \ref{avoidonenegation}).
\begin{prop}\label{onenegzbound}
    Let $k\in\N^+$, and let $R\subseteq X\times Y$ be a $(k,1)$-semi-equation. Then
    \[z_t^{\neg R}(m,n)\leq (t-1)\left(1+t(k-1)\right)m+2(t-1)n.\]
\end{prop}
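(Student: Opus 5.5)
Split $B$ according to how many non-neighbours each vertex has. Fix finite $A\subseteq X$, $B\subseteq Y$ with $|A|=m$, $|B|=n$ and $\neg R(A,B)$ $K_{t,t}$-free. For $b\in B$ write $C_b:=A\setminus R_b$, which is the $\neg R$-neighbourhood of $b$ in $A$. Call $b$ \emph{light} if $|C_b|\leq 2(t-1)$ and \emph{heavy} otherwise. Light vertices contribute at most $2(t-1)n$ non-edges in total. It then suffices to show that every $a\in A$ has at most $(t-1)(1+t(k-1))$ heavy $\neg R$-neighbours, and to sum over $a\in A$. All that is used about heavy $b$ below is $|C_b|\geq t$, so the threshold $2(t-1)$ leaves room to spare. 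If $t=1$ both sides of the claimed bound vanish trivially, so assume $t\geq 2$.

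\textbf{Step 1 (a chain bound).} Fix $x\in A$ and let $H_x$ be the set of heavy $b$ with $R(x,b)$. I claim $|H_x|\leq (k-1)(t-1)$.

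\begin{itemize}
\item[(a)] Order $H_x$ by $b\preceq b'$ iff $R_b\cap A\subseteq R_{b'}\cap A$. This is a preorder, and equal traces are comparable.
\item[(b)] Any $k$ elements of $H_x$ contain a comparable pair. If two of them have the same full fibre, their traces coincide. Otherwise the $k$ fibres are distinct and share the point $x$, so $(k,1)$-semi-equationality gives an inclusion of full fibres, hence of traces on $A$. So the antichains have size $<k$.
\item[(c)] Apply Dilworth's theorem to the quotient poset, then refine each class arbitrarily into a linear order. This covers $H_x$ by at most $k-1$ chains in which the traces $R_b\cap A$ are nested.
\item[(d)] In such a chain the sets $C_b$ are nested too. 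So any $t$ members $b_1,\dots,b_t$ have $\bigcap_i C_{b_i}=C_{b_j}$ for the $b_j$ with largest $R_{b_j}\cap A$. This intersection has size $\geq t$, which gives a $K_{t,t}$ in $\neg R(A,B)$.
\end{itemize}

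Hence each chain has at most $t-1$ elements, and $|H_x|\leq (k-1)(t-1)$.

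\textbf{Step 2 (heavy non-neighbours of a point).} Fix $a\in A$ and let $N(a)$ be the set of heavy $b$ with $\neg R(a,b)$. If $N(a)=\emptyset$ we are done. Otherwise pick $b_0\in N(a)$ and choose $t$ distinct points $x_1,\dots,x_t\in C_{b_0}$, which is possible since $b_0$ is heavy. Each $b\in N(a)$ falls into one of two cases.
\begin{itemize}
\item $\{x_1,\dots,x_t\}\subseteq C_b$. There are at most $t-1$ such $b$, since otherwise $t$ of them together with $x_1,\dots,x_t$ form a $K_{t,t}$ in $\neg R$.
\item $R(x_i,b)$ for some $i$, that is, $b\in H_{x_i}$. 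By Step 1 there are at most $t(k-1)(t-1)$ such $b$.
\end{itemize}
So $|N(a)|\leq (t-1)+t(k-1)(t-1)=(t-1)(1+t(k-1))$. Summing over $a\in A$ bounds the heavy non-edges by $(t-1)(1+t(k-1))m$, and adding the light contribution gives the stated bound.

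The main obstacle is Step 1, specifically transferring $(k,1)$-semi-equationality, which concerns \emph{distinct} full fibres, to the traces on $A$, where repeated or coinciding traces occur. Step 1 handles this by treating equal traces as comparable, applying Dilworth's theorem to the quotient, and observing that nested complements make the intersection of any $t$ chain members equal to a single $C_b$ of size $\geq t$.
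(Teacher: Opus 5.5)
Your proof is correct and is essentially the paper's argument: the paper likewise discards the $b$ with fewer than $t$ non-neighbours, picks $t$ distinct points of $A$, bounds their common $\neg R$-neighbours by $t-1$, bounds the $R$-neighbours of each chosen point by $(k-1)(t-1)$ via Dilworth's theorem and the nested complements in a chain, and then multiplies the resulting bound on the number of heavy $b$ by $m$. Your Step 2 never uses $\neg R(a,b)$ or the choice $x_i\in C_{b_0}$, so it in fact bounds the total number of heavy $b$ exactly as the paper does, and summing over $a\in A$ amounts to that same count times $m$.
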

\begin{proof}
    Let $A\subseteq X$ and $B\subseteq Y$ be finite, with $m:=|A|$ and $n:=|B|$, such that $\neg R(A,B)$ is $K_{t,t}$-free. It suffices to assume that $|\neg R_b|\geq t$ for all $b\in B$ and prove that
    \[|\neg R(A,B)|\leq (t-1)\left(1+t(k-1)\right)m+(t-1)n.\]

    If $m<t$, we are done, so suppose $m\geq t$, and let $a_1, \dots, a_t\in A$ be distinct. Let $B_0:=\{b\in B: \neg R(a_s, b)\text{ for all }s\in [t]\}$. Then $\neg R(\{a_1, \dots, a_t\}, B_0)$ is complete, so $|B_0|\leq t-1$.

    For $s\in [t]$, let $B_s:=\{b\in B: R(a_s, b)\}$, so that $B=B_0\cup B_1\cup \dots \cup B_t$. We claim that $|B_s|\leq (k-1)(t-1)$ for all $s\in [t]$, which would finish the proof, as then $|B|\leq (t-1)+t(k-1)(t-1)$.

    Fix $s\in [t]$. Consider $\mathcal{P}:=\{R_b: b\in B_s\}$ as a poset (under inclusion). Each element of $\mathcal{P}$ contains $a_s$, so by $(k,1)$-semi-equationality, $\mathcal{P}$ has width at most $k-1$. By Dilworth's theorem, $\mathcal{P}$ is the union of at most $k-1$ chains. That is, we may write $B_s=B^1\cup\dots\cup B^{k-1}$ such that, for all $i\in [k-1]$, $\{R_b: b\in B^i\}$ is a chain, and hence $\{\neg R_b: b\in B^i\}$ is a chain. But then $|B^i|\leq t-1$ for all $i\in [k-1]$: if $C$ is the minimal element of $\{\neg R_b: b\in B^i\}$, then $\neg R(C,B^i)$ is complete, and $|C|\geq t$ by assumption. Thus, $|B_s|\leq (k-1)(t-1)$ as required.
\end{proof}
We now consider arbitrary conjunctions of negated $(k,1)$-semi-equations. The following two lemmas will play an important role.
\begin{lemma}\label{chainlemma}
    Let $R\subseteq A\times B$ be a relation. Let $k\in\N$, and let $B_0\subseteq B$ be such that $\{R_b: b\in B_0\}$ is a chain (under inclusion) of size at most $2^k$. Then $R(A,B_0)$ can be written as a union $F_1\cup \dots\cup F_{k+1}$, where each $F_j$ is a union of edge-disjoint boxes (that is, $A_1\times B_1\cup \dots \cup A_r\times B_r$ for some pairwise disjoint $A_1, \dots, A_r\subseteq A$ and pairwise disjoint $B_1, \dots, B_r\subseteq B$).
\end{lemma}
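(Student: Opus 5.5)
We argue by induction on $k$, using a dyadic splitting of the chain. Order the elements of $B_0$ as $b_1,\dots,b_N$ with $R_{b_1}\subseteq R_{b_2}\subseteq\dots\subseteq R_{b_N}$, where $N\leq 2^k$. Several $b$ may share the same fibre; group them together, so that the number of distinct fibres $C_1\subsetneq C_2\subsetneq\dots\subsetneq C_M$ is $M\leq N\leq 2^k$. Let $D_i:=\{b\in B_0: R_b=C_i\}$; the $D_i$ are pairwise disjoint. Then
\[R(A,B_0)=\bigcup_{i\in[M]} C_i\times D_i .\]
The task is to cover this union by $k+1$ families, each consisting of boxes that are pairwise disjoint in both coordinates. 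Write $\Delta_i:=C_i\setminus C_{i-1}$, with $C_0:=\emptyset$. These are pairwise disjoint, and $C_i=\Delta_1\cup\dots\cup\Delta_i$.

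For the base case $k=0$ we have $M\leq 1$, so $R(A,B_0)$ is a single box $C_1\times D_1$, and $F_1$ is that box.

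For the inductive step, assume $M\leq 2^k$ with $k\geq 1$, and set $h:=2^{k-1}$. Split the chain into a lower half (indices $i\leq h$) and an upper half (indices $i>h$).
\begin{itemize}
\item Let $F_{k+1}$ be the single box $C_h\times(D_{h+1}\cup\dots\cup D_M)$. This box is contained in $R(A,B_0)$ because $C_h\subseteq C_i$ for all $i>h$.
\item The remaining edges are $\bigcup_{i\leq h} C_i\times D_i$ (the lower half) together with $\bigcup_{i>h}(C_i\setminus C_h)\times D_i$ (the upper half).
\end{itemize}
The upper half is again a chain of at most $2^{k-1}$ sets, namely $C_i\setminus C_h$ for $i>h$, living on the ground set $A\setminus C_h$. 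The lower half is a chain of at most $2^{k-1}$ sets contained in $C_h$.

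Apply the induction hypothesis to each half, obtaining families $F^{\mathrm{low}}_1,\dots,F^{\mathrm{low}}_k$ and $F^{\mathrm{up}}_1,\dots,F^{\mathrm{up}}_k$. Now set $F_j:=F^{\mathrm{low}}_j\cup F^{\mathrm{up}}_j$ for $j\in[k]$. Each $F_j$ is again a union of boxes that are disjoint in both coordinates, for two reasons:
\begin{itemize}
\item the $A$-sides of lower boxes lie in $C_h$, while those of upper boxes lie in $A\setminus C_h$;
\item the $B$-sides of lower boxes lie in $D_1\cup\dots\cup D_h$, while those of upper boxes lie in $D_{h+1}\cup\dots\cup D_M$.
\end{itemize}
Together with $F_{k+1}$ this gives $k+1$ families whose union is $R(A,B_0)$.

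The main subtlety is formulating the induction hypothesis so that it applies to the modified pieces. The upper-half sets $C_i\setminus C_h$ are not fibres of $R$ itself; they are fibres of the restricted relation $R\cap((A\setminus C_h)\times B)$. So the induction should be run on the general statement about any relation whose relevant fibres form a chain of size at most $2^k$. The lemma as stated is already of this form, since $R$ is arbitrary and $A$ can be replaced by a subset. Beyond that, one only needs to check that the lower-half and upper-half families are disjoint in both coordinates, which is exactly what the split at $C_h$ guarantees.
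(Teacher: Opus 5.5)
Your proposal is correct and is essentially the paper's proof. Both use induction on $k$, the same midpoint split with a pivot box $C_h\times(D_{h+1}\cup\dots\cup D_M)$, the same restriction of the upper half to $A\setminus C_h$, and the same merging of the two halves' families using disjointness in both coordinates; grouping repeated fibres into the $D_i$ (rather than quotienting by $b\sim b'$) and keeping the pivot index in the lower half are only cosmetic differences. The one omission is the trivial case $M<2^{k-1}$, where $C_h$ is undefined: there you apply the induction hypothesis directly and take $F_{k+1}=\emptyset$, as the paper does when it assumes $l>2^k$.
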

\begin{proof}
    It suffices to prove the lemma under the assumption that $|B_0|\leq 2^k$ and $R_b\neq R_{b'}$ for all distinct $b,b'\in B_0$. Indeed, consider the equivalence relation $\sim$ on $B$, defined by\linebreak $b\sim b':\Leftrightarrow R_b=R_{b'}$. Then $B_0/{\sim}$ satisfies this assumption, and if $R(A,B_0/{\sim})=F_1\cup\dots\cup F_{k+1}$, where each $F_j$ is a union of edge-disjoint boxes, then $R(A,B_0)=F'_1\cup\dots\cup F'_{k+1}$, where each $F'_j:=\{(a,b)\in A\times B_0: (a,[b]_\sim)\in F_j\}$ is a union of edge-disjoint boxes.

    Induct on $k$. When $k=0$, $B_0$ has at most one element. Without loss of generality, $B_0$ has exactly one element $b$, so $R(A, B_0)=R_b\times \{b\}$.

    Suppose the claim is true for $k$, and let $B_0\subseteq B$ be a subset of size $l\leq 2^{k+1}$; by the induction hypothesis, we may suppose $l>2^k$. Enumerate the elements of $B_0$ as $b_1, \dots, b_l$ such that $R_{b_1}\subseteq \dots \subseteq R_{b_l}$. Then
    \begin{align*}
        R(A,B_0)&=R_{b_{2^k}}\times \{b_{2^k}\}\cup \bigcup_{i=1}^{2^k-1}R_{b_i}\times \{b_i\}\cup \bigcup_{i=2^k+1}^l R_{b_i}\times \{b_i\}\\
        &=R_{b_{2^k}}\times \{b_{2^k}, \dots, b_l\}\cup \bigcup_{i=1}^{2^k-1}R_{b_i}\times \{b_i\}\cup \bigcup_{i=2^k+1}^l (R_{b_i}\setminus R_{b_{2^k}})\times \{b_i\}\\
        &=R_{b_{2^k}}\times \{b_{2^k}, \dots, b_l\}\cup R(A,\{b_1, \dots, b_{2^k-1}\})\cup R(A\setminus R_{b_{2^k}},\{b_{2^k+1}, \dots, b_l\}).
    \end{align*}

    By the induction hypothesis, $R(A,\{b_1, \dots, b_{2^k-1}\})$ can be written as a union $F_1\cup\dots\cup F_k$, where each $F_j$ is the union of edge-disjoint boxes. These boxes are contained in $R_{b_{2^k}}\times \{b_1, \dots, b_{2^k-1}\}$, as $R_{b_i}\subseteq R_{b_{2^k}}$ for all $i\in [2^k-1]$. Again by the induction hypothesis, $R(A\setminus R_{b_{2^k}},\{b_{2^k+1}, \dots, b_l\})$ can be written as a union $G_1\cup\dots\cup G_k$, where each $G_j$ is the union of edge-disjoint boxes contained in $(A\setminus R_{b_{2^k}})\times \{b_{2^k+1}, \dots, b_l\}$.
    
    For $j\in [k]$, let $H_j=F_j\cup G_j$, so $H_j$ is the union of edge-disjoint boxes. We conclude that
     \[R(A,B_0)=R_{b_{2^k}}\times \{b_{2^k}, \dots, b_l\}\cup H_1\cup\dots\cup H_k\]
     as required.
\end{proof}
\begin{remark}\label{remarkemptyset}
    In the statement of the lemma above, it suffices to assume that $\{R_b: b\in B_0\}\setminus\emptyset$ has size at most $2^k$. Indeed, if $B_0':=\{b\in B_0: R_b\neq\emptyset\}$, then $R(A,B_0)=R(A,B_0')$, so we may apply the lemma to $B_0'\subseteq B$ instead.
\end{remark}
Say that a function $c:(\N^+)^3\to\R_{\geq 0}$ is \textit{increasing} if, for all $(t,m,n),(t',m',n')\in(\N^+)^3\to\R^3$ with $t\leq t'$, $m\leq m'$, and $n\leq n'$, we have $c(t,m,n)\leq c(t',m',n')$.
\begin{lemma}\label{boxlemma}
    Let $R\subseteq X\times Y$ with
    \[z_t^R(m,n)\leq c(t,m,n)\cdot (m+n),\]
    where $c:(\N^+)^3\to\R_{\geq 0}$ is an increasing function. Suppose $A\subseteq X$ and $B\subseteq Y$ are finite with $|A|=m$ and $|B|=n$, $F\subseteq A\times B$ is a union of edge-disjoint boxes, and $R\cap F$ is $K_{t,t}$-free. Then
    \[|R\cap F|\leq c(t,m,n)\cdot (m+n).\]
\end{lemma}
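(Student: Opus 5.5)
Write $F=A_1\times B_1\cup\dots\cup A_r\times B_r$ with the $A_i$ pairwise disjoint subsets of $A$ and the $B_i$ pairwise disjoint subsets of $B$. Set $m_i:=|A_i|$ and $n_i:=|B_i|$. Discarding empty boxes, we may assume every $m_i$ and $n_i$ is positive. The plan is to bound the contribution of each box separately using the hypothesised Zarankiewicz bound for $R$, and then sum. By disjointness, $\sum_i m_i\leq m$ and $\sum_i n_i\leq n$.

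First, since the boxes are edge-disjoint (indeed their row sets and column sets are disjoint), we have
\[|R\cap F|=\sum_{i=1}^r|R(A_i,B_i)|.\]
Next, each $R(A_i,B_i)=R\cap (A_i\times B_i)$ is a subgraph of $R\cap F$. Hence any copy of $K_{t,t}$ in $R(A_i,B_i)$ would be a copy in $R\cap F$, so $R(A_i,B_i)$ is $K_{t,t}$-free. By the definition of $z_t^R$,
\[|R(A_i,B_i)|\leq z_t^R(m_i,n_i)\leq c(t,m_i,n_i)(m_i+n_i).\]
Since $c$ is increasing and $m_i\leq m$, $n_i\leq n$, we get $c(t,m_i,n_i)\leq c(t,m,n)$.

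Summing over $i$ then gives
\[|R\cap F|\leq c(t,m,n)\sum_i(m_i+n_i)\leq c(t,m,n)(m+n).\]

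The only point needing care, which I expect to be the main (minor) obstacle, is that $z_t^R(m_i,n_i)$ is defined as a maximum over sets of exactly those sizes inside $X$ and $Y$. Here $A_i$ and $B_i$ themselves are such sets, so $|R(A_i,B_i)|\leq z_t^R(m_i,n_i)$ follows immediately. The other subtlety is checking that $K_{t,t}$-freeness passes to the sub-boxes. This holds because $R(A_i,B_i)$ is exactly the restriction of $R\cap F$ to $A_i\times B_i$: the sets $A_i\times B_i$ are disjoint, so no edge of another box lies inside $A_i\times B_i$.
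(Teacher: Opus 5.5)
Your proof is correct and follows the paper's argument: apply the Zarankiewicz bound to each $K_{t,t}$-free piece $R(A_i,B_i)$, use that $c$ is increasing to replace $c(t,|A_i|,|B_i|)$ by $c(t,m,n)$, and sum using the pairwise disjointness of the $A_i$ and of the $B_i$. Discarding empty boxes and invoking monotonicity explicitly only spells out steps the paper leaves implicit.
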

\begin{proof}
    Say $F=A_1\times B_1\cup\dots\cup A_r\times B_r$, where $A_1, \dots, A_r\subseteq A$ are pairwise disjoint, and $B_1, \dots, B_r\subseteq B$ are pairwise disjoint. For all $j\in [r]$, $R(A_j, B_j)$ is $K_{t,t}$-free, so
    \[|R(A_j,B_j)|\leq c(t,m,n)\cdot (|A_j|+|B_j|).\]
    Summing $|R(A_j, B_j)|$ gives the desired bound.
\end{proof}
We are now ready to derive Zarankiewicz bounds for an arbitrary conjunction of negated $(k,1)$-semi-equations. By way of motivation, we give a brief description of the role played by the previous two lemmas.

Our proof is by induction on the number of conjuncts. Given a conjunction $R:=\neg R^{(1)}\wedge\dots\wedge \neg R^{(d+1)}\subseteq X\times Y$ of negated $(k,1)$-semi-equations, by induction, we have Zarankiewicz bounds for $R^{(\hat{e})}:=\bigwedge_{i\in [d+1], i\neq e} \neg R^{(i)}$, for all $e\in [d+1]$. Suppose $A\subseteq X$ and $B\subseteq Y$ are such that $R(A,B)$ is $K_{t,t}$-free. We will use Lemma \ref{chainlemma} to decompose $A\times B$ into a small number of boxes $A'\times B'$ such that, for some $e\in [d+1]$, $\neg R^{(e)}(A',B')$ is the union of a small number of sets which are unions of edge-disjoint boxes. We can then apply Lemma \ref{boxlemma} to obtain a bound on $|R(A',B')|=|R^{(\hat{e})}\cap (\neg R^{(e)}(A',B'))|$; summing over $(A',B')$ gives a bound on $|R(A,B)|$.
\begin{prop}\label{manynegszbound}
    For $d\in\N^+$, let $R^{(1)}, \dots, R^{(d)}\subseteq X\times Y$ be $(k,1)$-semi-equations, and let $R=\neg R^{(1)}\wedge \dots \wedge \neg R^{(d)}$. Then
    \[z^R_t(m,n)\leq 4^{d-1}(d!)t^{d+1}k^d (m+n)\log^{d-1} (m+1).\]
\end{prop}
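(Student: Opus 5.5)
We argue by induction on $d$. The base case $d=1$ is Proposition~\ref{onenegzbound}: its bound $(t-1)(1+t(k-1))m+2(t-1)n$ is at most $t^2k(m+n)$, which is the claimed bound for $d=1$. For the inductive step, we assume the bound for all conjunctions of $d$ negated $(k,1)$-semi-equations. Write $R=\neg R^{(1)}\wedge\dots\wedge\neg R^{(d+1)}$ and $R^{(\hat e)}$ for the conjunction omitting $e$. Fix finite $A\subseteq X$, $B\subseteq Y$ with $|A|=m$, $|B|=n$ and $R(A,B)$ $K_{t,t}$-free. As in the proof of Proposition~\ref{onenegzbound}, we may discard the $b\in B$ with $|R_b\cap A|<t$, paying at most $(t-1)n$. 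If $m<t$ there is nothing to prove. Otherwise we fix distinct $a_1,\dots,a_t\in A$.

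The set $B_0$ of those $b$ with $\neg R^{(e)}(a_s,b)$ for all $s\in[t]$ and all $e\in[d+1]$ has size at most $t-1$, since these $b$ give a complete $R$-bipartite graph with $\{a_1,\dots,a_t\}$. Every other $b$ lies in some $B_{s,e}:=\{b: R^{(e)}(a_s,b)\}$, so it suffices to bound $|R(A,B_{s,e})|$ for each of the $t(d+1)$ pairs $(s,e)$. Fix such a pair. All fibres $R^{(e)}_b$ with $b\in B_{s,e}$ contain $a_s$. By $(k,1)$-semi-equationality the poset $\{R^{(e)}_b\cap A\}$ has width at most $k-1$, so Dilworth's theorem splits $B_{s,e}$ into at most $k-1$ chains. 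Each chain has at most $m+1$ distinct traces on $A$, hence at most $m+1\le 2^{\lceil\log_2(m+1)\rceil}$ of them. For the complementary chain $\{\neg R^{(e)}_b\cap A\}$, Lemma~\ref{chainlemma} writes $\neg R^{(e)}(A,B^i)$ as a union of at most $\log_2(m+1)+2$ sets $F_j$, each a union of edge-disjoint boxes. Since $R(A,B^i)=\bigcup_j R^{(\hat e)}\cap F_j$ and each piece is $K_{t,t}$-free, Lemma~\ref{boxlemma} applies with the inductive bound $c(t,m,n)=4^{d-1}(d!)t^{d+1}k^d\log^{d-1}(m+1)$, which is increasing. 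Each piece therefore contributes at most $c\,(m+|B^i|)$.

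Summing, the contribution of $B_{s,e}$ is at most $(k-1)(\log_2(m+1)+2)\,c\,(m+n)$. Summing over $s,e$ then gives roughly $t(d+1)k\cdot O(\log(m+1))\cdot c\,(m+n)$. This is the form $4^{d}((d+1)!)t^{d+2}k^{d+1}(m+n)\log^{d}(m+1)$, with the factor $4$ absorbing $\log_2$ versus $\ln$ and the additive $+2$ (using $\log(m+1)\ge\log 2$ when $m\ge1$).

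The main obstacle is that bookkeeping of constants. The $m$-term is summed over roughly $(k-1)t(d+1)$ chains rather than being shared, so we must check it is still absorbed. To handle this, we will apply Lemma~\ref{boxlemma} with the box sizes measured inside $A\times B^i$, getting $m+|B^i|$, and accept the factor $k$ in front of $m$, since the target already carries an extra factor $k$. The additive $(t-1)$ and $|B_0|$ terms are easily absorbed. A secondary point is ensuring Lemma~\ref{chainlemma}'s count of $k'+1$ pieces for chains of size at most $2^{k'}$ gives $\lceil\log_2(m+1)\rceil+1\le 4\log(m+1)$ for $m\ge1$, which holds. If the constant $4$ turns out too tight, we would first try bounding the number of distinct nonempty traces by $m$ via Remark~\ref{remarkemptyset}.
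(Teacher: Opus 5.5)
Your proposal is correct and takes essentially the same route as the paper. The shared steps are:
\begin{itemize}
\item induction on $d$, with Proposition~\ref{onenegzbound} as the base case;
\item splitting $B$ into $B_0$ and the sets $B_{e,s}$ via $a_1,\dots,a_t$;
\item using Dilworth to cover each $B_{e,s}$ by $k-1$ chains;
\item applying Lemma~\ref{chainlemma} to the complementary chains;
\item applying Lemma~\ref{boxlemma} with the inductive bound, each piece contributing at most $c(m+n)$.
\end{itemize}

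Some remarks on details:
\begin{itemize}
\item The preliminary discarding of those $b$ with $|R_b\cap A|<t$ is harmless but not needed here.
\item The cruder estimate $\log_2(m+1)+2\le 4\log(m+1)$ fails at $m=1$. The ceiling version $\lceil\log_2(m+1)\rceil+1\le 4\log(m+1)$, which you also state, is valid and suffices.
\item The paper gets the slightly smaller count of $\lceil\log_2 m\rceil+1$ pieces by excluding the empty trace via Remark~\ref{remarkemptyset}.
\end{itemize}
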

\begin{proof}
    Induct on $d\in\N^+$. The case $d=1$ follows from Proposition \ref{onenegzbound}. Fix $d\in\N^+$ and suppose the claim holds for $d$. Let $R^{(1)}, \dots, R^{(d+1)}\subseteq X\times Y$ be $(k,1)$-semi-equations, and let $R=\neg R^{(1)}\wedge \dots \wedge \neg R^{(d+1)}$. For $e\in [d+1]$, let
    \[R^{(\hat{e})}:=\bigwedge_{i\in [d+1], i\neq e} \neg R^{(i)}.\]
    Let $c:=4^{d-1}(d!)t^{d+1}k^d\log^{d-1}(m+1)$. By the induction hypothesis, for all $e\in [d+1]$,
    \[z_t^{R^{(\hat{e})}}(m,n)\leq c(m+n).\]
    
    Let $A\subseteq X$ and $B\subseteq Y$ be finite, such that $R(A,B)$ is $K_{t,t}$-free. Let $m:=|A|$ and $n:=|B|$. If $m<t$, we are done, so suppose $m\geq t$, and let $a_1, \dots, a_t\in A$ be distinct. Let
    \[B_0:=\{b\in B: R(a_s, b)\text{ for all }s\in [t]\}=\{b\in B: \neg R^{(e)}(a_s, b)\text{ for all }e\in [d+1], s\in [t]\}.\] Then $R(\{a_1, \dots, a_t\}, B_0)$ is complete, so $|B_0|\leq t-1$. For $e\in [d+1]$ and $s\in [t]$, let $B_{e,s}:=\{b\in B: R^{(e)}(a_s, b)\}$, so that $B=B_0\sqcup\bigcup_{e,s}B_{e,s}$.

    Fix $e\in [d+1]$ and $s\in [t]$. Consider $\mathcal{P}:=\{R^{(e)}_b: b\in B_{e,s}\}$ as a poset (under inclusion). Each element of $\mathcal{P}$ contains $a_s$, so by $(k,1)$-semi-equationality, $\mathcal{P}$ has width at most $k-1$. By Dilworth's theorem, $\mathcal{P}$ is the union of at most $k-1$ chains. That is, we may write $B_{e,s}=B^1\cup\dots \cup B^{k-1}$ such that, for all $i\in [k-1]$, $\{R^{(e)}_b: b\in B^i\}$ is a chain, and hence $\mathcal{C}^i:=\{\neg R^{(e)}_b: b\in B^i\}$ is a chain. Since $|A|=m$, for all $i\in [k-1]$ we have $|\mathcal{C}^i\setminus \{\emptyset\}|\leq m$. Thus, by Lemma \ref{chainlemma} (and Remark \ref{remarkemptyset}), for all $i\in [k-1]$, $\neg R^{(e)}(A,B^i)$ can be written as a union $F^i_1\cup\dots\cup F^i_{\lceil \log_2 m\rceil+1}$, where each $F^i_j$ is a union of edge-disjoint boxes. For all $i\in [k-1]$, $R(A,B^i)=R^{(\hat{e})}\cap (\neg R^{(e)}(A,B^i))$ is $K_{t,t}$-free, and so
    \[|R(A,B^i)|\leq \sum_{j=1}^{\lceil \log_2 m\rceil+1}\abs{R^{(\hat{e})}\cap F^i_j}\leq (\lceil \log_2 m\rceil+1) c(m+n)\]
    by Lemma \ref{boxlemma}. Summing over $i\in [k-1]$, we have
    \[|R(A,B_{e,s})|\leq (k-1)(\lceil \log_2 m\rceil+1) c(m+n).\]
    
    Summing over $e\in [d+1]$ and $s\in [t]$, we have
    \[|R(A,B\setminus B_0)|\leq (d+1)t(k-1)(\lceil \log_2 m\rceil+1) c(m+n),\]
    and so
    \begin{align*}
        |R(A,B)|&\leq (d+1)tk(\lceil \log_2 m\rceil+1) c(m+n)\\
        &\leq 4(d+1)tk(\log(m+1)) c(m+n)\\
        &=4^d((d+1)!)t^{d+2}k^{d+1}(m+n)\log^d(m+1)
    \end{align*}
    as required.
\end{proof}
\begin{remark}\label{avoidonenegation}
    To derive a Zarankiewicz bound for $\neg R^{(1)}\wedge \dots \wedge \neg R^{(d)}$, one can avoid treating the case $d=1$ (Proposition \ref{onenegzbound}) separately by modifying the proof of Proposition \ref{manynegszbound} so that the base case of the induction is $d=0$ (Lemma \ref{fullrelation}); this would produce a Zarankiewicz bound of $O_{t,d,k}((m+n)\log^d(m+1))$.
\end{remark}
It remains to derive Zarankiewicz bounds for a conjunction $R\wedge R'$, where $R$ is a $(k,1)$-semi-equation and we are given Zarankiewicz bounds for $R'$.

We first consider the case where $R'$ is the full relation; that is, we derive Zarankiewicz bounds for a single $(k,1)$-semi-equation. For the purposes of proving that Boolean combinations of $(k,1)$-semi-equations have almost linear Zarankiewicz bounds, it is not necessary to consider this case separately, but this will allow us to derive better bounds for conjunctions of $(k,1)$-semi-equations (see Remark \ref{avoidoneequation}).
\begin{prop}\label{(k,1)zbound}
    Let $k\in\N^+$, and suppose $R\subseteq X\times Y$ is a $(k,1)$-semi-equation. Then
    \[z^R_t(m,n)\leq (k-1)(t-1)^2m+(t-1)n\]
    for all $t,m,n\in\N^+$.
\end{prop}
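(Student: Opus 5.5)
I would follow the template of Proposition \ref{onenegzbound}, but with the roles of $R$ and $\neg R$ swapped. Let $A\subseteq X$ and $B\subseteq Y$ be finite with $|A|=m$, $|B|=n$, and suppose $R(A,B)$ is $K_{t,t}$-free.

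\textbf{Reduction to large fibres.} Let $B'$ be the set of $b\in B$ with $|R_b\cap A|\leq t-1$. These contribute at most $(t-1)n$ edges in total. It therefore suffices to show
\[|R(A,B\setminus B')|\leq (k-1)(t-1)^2m.\]
From now on I assume every $b\in B$ satisfies $|R_b\cap A|\geq t$.

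\textbf{Charging edges to points.} The natural idea is to charge each edge $(a,b)$ to $a$ and bound the degree of each $a\in A$ by $(k-1)(t-1)^2$. Fix $a\in A$ and let $B_a:=\{b\in B: R(a,b)\}$.
\begin{itemize}
\item Every fibre $R_b$ with $b\in B_a$ contains $a$. By $(k,1)$-semi-equationality, the poset $\{R_b\cap A: b\in B_a\}$ has width at most $k-1$. Here I use distinct fibres; fibres that coincide are treated as equal elements of the poset. This matches how the paper uses $(k,1)$-semi-equationality in Proposition \ref{onenegzbound}.
\item By Dilworth's theorem, $B_a=B^1\cup\dots\cup B^{k-1}$, where each $\{R_b: b\in B^i\}$ is a chain.
\end{itemize}
The goal is then $|B^i|\leq (t-1)^2$ for each $i\in[k-1]$.

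\textbf{Bounding a single chain.} This is the main obstacle. In Proposition \ref{onenegzbound}, a chain of complements is killed immediately by its minimal element. Here, a chain of fibres $R_{b_1}\subseteq\dots\subseteq R_{b_l}$, each of size at least $t$, gives $R(R_{b_1}\cap A,\{b_1,\dots,b_l\})$ complete. Since $|R_{b_1}\cap A|\geq t$, this forces $l\leq t-1$.

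So each chain in fact has at most $t-1$ elements, counted with multiplicity: equal fibres are also comparable, so repeated $b$ are included in the chain. This already gives degree at most $(k-1)(t-1)$ per $a$, which is better than the stated bound.

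\textbf{Where the weaker $(t-1)^2$ might come from.} The extra factor of $t-1$ in the statement suggests the author handles equal fibres differently, perhaps grouping by $\sim$-classes, with at most $t-1$ classes in a chain and at most $t-1$ elements $b$ per class. I would double-check the multiplicity issue in the width bound, since the definition of $(k,1)$-semi-equation is stated for arbitrary $y_1,\dots,y_k$. If repeated fibres are allowed, $y_i=y_j$ trivially satisfies the inclusion condition, so only distinct fibres matter. Accordingly, I would apply Dilworth's theorem to distinct fibres, bound each chain of distinct fibres by $t-1$, and bound each equivalence class by $t-1$, since the class of $b$ is complete to $R_b\cap A$, which has size at least $t$.

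\textbf{Summing.} Each $a$ then has degree at most $(k-1)(t-1)^2$. Summing over $a\in A$ and adding the $(t-1)n$ edges from $B'$ gives the claimed bound $(k-1)(t-1)^2m+(t-1)n$.
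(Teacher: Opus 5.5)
Your proof is correct, but it takes a different route from the paper's.

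\textbf{The paper's route.} After discarding small fibres, the paper pays the factor $t-1$ up front: no $t$ elements of $B$ can share a fibre, so it may assume the fibres are pairwise distinct. It then peels the poset of fibres into layers $B_1,B_2,\dots$ of maximal elements. Within one layer, any $k$ distinct fibres have empty intersection, since two comparable distinct fibres would contradict maximality. Hence $|R(A,B_j)|\leq (k-1)m$. There are at most $t-1$ layers, because a descending chain through $t$ layers yields a $K_{t,t}$. So the paper bounds height globally and width pointwise, without Dilworth's theorem.

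\textbf{Your route.} You fix a point $a$ and apply Dilworth to its neighbourhood, as in Proposition \ref{onenegzbound}, so both width and height are bounded locally. Passing to traces $R_b\cap A$ is harmless: incomparable traces come from incomparable, hence distinct, fibres.

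\textbf{What your route buys.} It gives a better bound than the one you settled for, and your retreat to the $(t-1)^2$ version is unnecessary. The width bound only involves distinct fibres. But once you set $B^i:=\{b\in B_a: R_b\cap A \text{ lies in the } i\text{th chain}\}$, the family $\{R_b\cap A: b\in B^i\}$ is still a chain. Its minimal member has size at least $t$ and is complete to all of $B^i$. Hence $|B^i|\leq t-1$, counting elements of $B$ with repeated fibres.

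So every $a$ has degree at most $(k-1)(t-1)$, and you obtain
\[z_t^R(m,n)\leq (k-1)(t-1)m+(t-1)n.\]
This improves the stated bound by a factor of $t-1$ in the first term. The separate treatment of equal-fibre classes, which you correctly guessed is the source of the $(t-1)^2$, is only needed in the paper's global layering. There, equal maximal fibres fall into the same layer.
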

\begin{proof}
    Let $A\subseteq X$ and $B\subseteq Y$ be finite, with $m:=|A|$ and $n:=|B|$, such that $R(A,B)$ is $K_{t,t}$-free. It suffices to assume that $|R_b|\geq t$ for all $b\in B$ and prove that $|R(A,B)|\leq (k-1)(t-1)^2m$. Now, for distinct $b_1, \dots, b_t\in B$, we cannot have $R_{b_1}=\dots=R_{b_t}$, or else $R_{b_1}\times \{b_1, \dots, b_t\}$ would be a copy of $K_{s,t}$ in $R$, where $s=|R_{b_1}|\geq t$. Thus, it suffices to assume that $R_b\neq R_{b'}$ for all distinct $b,b'\in B$ and prove that $|R(A,B)|\leq (k-1)(t-1)m$.

    Define $B_0, B_1, \dots\subseteq B$ inductively as follows. Set $B_0:=\emptyset$. Having defined $B_0, \dots, B_j$, define $B'_j:=B\setminus (B_0\cup\dots\cup B_j)$ and $B_{j+1}:=\{b\in B'_j: R_b\text{ is maximal in the poset }\{R_{b'}: b'\in B'_j\}\}$. For all $j\in \N$, by $(k,1)$-semi-equationality, if $b_1, \dots, b_k\in B_j$ are distinct, then $\bigcap_{i\in [k]} R_{b_i}=\emptyset$: otherwise, without loss of generality, we would have $R_{b_1}\subseteq R_{b_2}$, and so $R_{b_1}\subsetneq R_{b_2}$ by assumption, contradicting the maximality of $R_{b_1}$. Thus, for all $j\in\N$, $|R(A,B_j)|\leq (k-1)m$.

    We claim that $B=B_1\cup\dots\cup B_{t-1}$. Indeed, suppose there is $b_t\in B\setminus (B_1\cup\dots \cup B_{t-1})$. Since $b_t\not\in B_{t-1}$, there is $b_{t-1}\in B_{t-1}$ such that $R_{b_t}\subseteq R_{b_{t-1}}$. Since $b_{t-1}\not\in B_{t-2}$, there is $b_{t-2}\in B_{t-2}$ such that $R_{b_{t-1}}\subseteq R_{b_{t-2}}$. Continuing, we obtain $b_t\in B_t, b_{t-1}\in B_{t-1}, \dots, b_1\in B_1$ such that $R_{b_t}\subseteq R_{b_{t-1}}\subseteq \dots\subseteq R_{b_1}$. But now we have a contradiction: $R_{b_t}\times \{b_1, \dots, b_t\}$ is a copy of $K_{s,t}$ in $R$, where $s=|R_{b_t}|\geq t$.

    We conclude that $|R(A,B)|= \sum_{j=1}^{t-1}|R(A,B_j)|\leq (k-1)(t-1)m$ as required.
\end{proof}
We now consider the case where $R'$ is an arbitrary relation with given Zarankiewicz bounds. We will need the following simple combinatorial lemma.
\begin{lemma}\label{unionbound}
    Let $k\in\N^+$. Let $X$ be a set, and let $A_1, \dots, A_k\subseteq X$ be finite sets of size at least $n$. If $A_1\cap\dots\cap A_k=\emptyset$, then $|A_1\cup\dots\cup A_k|\geq \frac{k}{k-1}n$.
\end{lemma}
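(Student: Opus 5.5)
We argue by double counting. Let $U:=A_1\cup\dots\cup A_k$, which is finite. For each $x\in U$, let $\mu(x):=|\{i\in[k]: x\in A_i\}|$ be the number of sets containing $x$. The hypothesis $A_1\cap\dots\cap A_k=\emptyset$ says exactly that no element lies in all $k$ sets, so $\mu(x)\leq k-1$ for every $x\in U$.

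Next we count the pairs $(x,i)$ with $x\in A_i$ in two ways:
\[
kn\leq \sum_{i=1}^k |A_i| = \sum_{x\in U}\mu(x)\leq (k-1)|U|.
\]
The first inequality uses $|A_i|\geq n$, and the last uses the bound $\mu(x)\leq k-1$.

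For $k\geq 2$, dividing by $k-1$ gives $|U|\geq \frac{k}{k-1}n$, as required. The case $k=1$ is degenerate: the hypothesis forces $A_1=\emptyset$, so $n=0$, and the expression $\frac{k}{k-1}$ is undefined. We would note that the lemma is only intended, and only meaningful, for $k\geq 2$ (or treat $k=1$ as vacuous with $n=0$).

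There is no real obstacle here. The only point needing care is this degenerate case $k=1$. Apart from that, the argument is a single averaging inequality.
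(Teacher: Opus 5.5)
Your double-counting proof is correct: no point lies in all $k$ sets, so $kn\leq\sum_i|A_i|=\sum_{x\in U}\mu(x)\leq (k-1)|U|$, and dividing by $k-1$ gives the bound for $k\geq 2$. The paper also tacitly assumes $k\geq 2$, and applies the lemma only in that case.

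The paper takes a different route. It assumes without loss of generality that $|A_k|=n$, i.e.\ that $A_k$ is a smallest set. There are then two cases.
\begin{itemize}
\item If some $i<k$ has $|A_i\cap A_k|\leq(1-\frac{1}{k-1})n$, then $|A_i\cup A_k|\geq n+\frac{1}{k-1}n$ already.
\item Otherwise each $A_k\setminus A_i$ has fewer than $\frac{n}{k-1}$ elements. These $k-1$ sets cannot cover $A_k$, so some point of $A_k$ lies in every $A_i$, a contradiction.
\end{itemize}

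Your averaging argument is shorter, symmetric in the sets, and needs no normalisation. It also generalises at once: if every point lies in at most $r$ of the sets, then $|U|\geq\frac{k}{r}n$.

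The paper's argument proves slightly more. The bound is already attained by the union of just two of the sets, a smallest one and some other one. This extra strength is not needed in the proof of Proposition \ref{conjunctions}, where only the full union $C_1\cup\dots\cup C_k$ is used.
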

\begin{proof}
    Without loss of generality, suppose $|A_k|=n$. If there is $i\in [k-1]$ such that $|A_i\cap A_k|\leq (1-\frac{1}{k-1})n$, we would be done, as then $|A_i\cup A_k|\geq |A_k|+|A_i\setminus A_k|\geq (1+\frac{1}{k-1})n$. So, suppose $|A_i\cap A_k|> (1-\frac{1}{k-1})n$ for all $i\in [k-1]$. Then $|A_k\cap A_i^\mathrm{c}|<\frac{1}{k-1}n$ for all $i\in [k-1]$, so $|A_k\setminus (A_1^\mathrm{c}\cup\dots \cup A_{k-1}^\mathrm{c})|>0$. But that precisely says that $A_1\cap\dots\cap A_k\neq \emptyset$.
\end{proof}
\begin{prop}\label{conjunctions}
    Let $k\in \N^+$. Let $R,R'\subseteq X\times Y$, such that $R$ is a $(k,1)$-semi-equation and
    \[z_t^{R'}(m,n)\leq c(t,m,n)\cdot (m+n)\]
    for all $t,m,n\in\N^+$, where $c:(\N^+)^3\to\R_{\geq 1}$ is an increasing function. Then
    \[z_t^{R\wedge R'}(m,n)\leq 8k^4(\log^2(m+1)) c(t,m,n)\cdot(m+n)\]
    for all $t,m,n\in\N^+$.
\end{prop}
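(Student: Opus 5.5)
The plan is to imitate the proof of Proposition~\ref{manynegszbound}. We decompose $R(A,B)$ into a small number of pieces, each of which is a union of boxes with controlled overlaps, and we bound $R'$ on each piece using (a variant of) Lemma~\ref{boxlemma}. The two logarithmic factors should come from two sources. The first is a dyadic-type splitting of $B$ according to the size of the fibres $R_b$, with ratio $\frac{k}{k-1}$, so there are about $k\log(m+1)$ scales. The second is a chain-lemma style halving inside each scale, which costs about $\log_2 m$.

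First, I would strengthen Lemma~\ref{boxlemma}. Suppose $F=\bigcup_i A_i\times B_i$ with the $B_i$ pairwise disjoint, but with the $A_i$ only required to cover each $a\in A$ at most $k-1$ times. Then summing the bounds $c(|A_i|+|B_i|)$ still gives $|R'\cap F|\le c\cdot((k-1)m+n)$. This is the form in which $(k,1)$-semi-equationality enters. If $\mathcal{L}$ is an antichain of distinct fibres of $R$, then no point of $A$ lies in $k$ members of $\mathcal{L}$. Grouping the $b$'s with equal fibre, $R(A,B_{\mathcal L})$ is such a union of boxes with multiplicity at most $k-1$. Hence $|(R\wedge R')(A,B_{\mathcal L})|\le k\,c\,(m+n)$. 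So one antichain layer costs only $O(k\,c\,(m+n))$. What remains is to cover all of $R(A,B)$ using $O(k^3\log^2(m+1))$ such layer-like pieces.

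Next, I would fix a scale $B^{(j)}:=\{b: (\tfrac{k}{k-1})^j\le |R_b|<(\tfrac{k}{k-1})^{j+1}\}$. There are at most $1+\log(m)/\log(\tfrac{k}{k-1})\le 1+k\log m$ such scales. Within one scale, Lemma~\ref{unionbound} constrains the structure. Any $k$ maximal fibres in the scale that pairwise fail to be nested have empty common intersection, so their union is noticeably larger than each of them. I would use this to show that, restricted to one scale, the poset of fibres has the following property: after removing the maximal layer, each remaining fibre is contained in a member of that layer, and these containments can be chosen so that the "parent" sets again cover $A$ with multiplicity at most $k-1$. Then I would run the recursion of Lemma~\ref{chainlemma}. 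Take the middle antichain layer $L$ (by depth) of the fibres in the scale. Write $R(A,B^{(j)})$ as the union of three parts: boxes of the form $R_{b'}\times\{b: R_b\subseteq R_{b'}\text{ assigned to }b'\}$ for $b'\in L$, which have multiplicity at most $k-1$; the part strictly below $L$, which lies inside these boxes; and the part above $L$, restricted to the complement. Each recursion level contributes $O(k)$ pieces of multiplicity at most $k-1$, and the depth is $O(\log_2 m)$. Summing gives $O(k^2\log(m+1))\,c\,(m+n)$ per scale, hence $O(k^3\log^2(m+1))\,c\,(m+n)$ in total. The crude constants $4$ and $2$ from $\lceil\log_2 m\rceil+1\le 4\log(m+1)$ etc.\ should combine to the stated $8k^4$. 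I would also add a separate treatment of the $b$ with $|R_b|<t$, or of $t$ equal fibres, as in Proposition~\ref{(k,1)zbound}; this costs only $t\le c$ per element, using $c\ge1$ and monotonicity.

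The main obstacle is the middle step. There I need the parts "below $L$" and "above $L$" to be cut out by genuine boxes whose $A$-sides have bounded multiplicity. In Lemma~\ref{chainlemma} this was automatic because the fibres formed a chain. Here the fibres only form a poset of width at most $k-1$ locally at each point. The first thing I would try is to use Dilworth locally, as in Proposition~\ref{manynegszbound}. For each $a\in A$, the fibres through $a$ split into at most $k-1$ chains. I would then combine this with the size-scale restriction and Lemma~\ref{unionbound} to argue that, within one scale, a chain of strictly increasing fibres has length $O(k\log m)$ at worst. If this works, it would replace the halving recursion by a bounded number of antichain layers per scale, which again yields the $\log^2$ bound.
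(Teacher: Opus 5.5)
There is a genuine gap, at exactly the step you flag as the main obstacle, and neither of your suggestions closes it.

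Put $U:=\bigcup_{b'\in L}R_{b'}$, and let $B_{\mathrm{up}}$ be the elements above $L$. Your three parts are the boxes over the lower part, the lower part itself, and the upper part restricted to $A\setminus U$. None of them covers the edges of $R(U,B_{\mathrm{up}})$. In Lemma \ref{chainlemma} this part is the complete box $R_{b_{2^k}}\times\{b_{2^k},\dots,b_l\}$. In a poset, however, an upper fibre may contain no member of $L$, or many of them (so the $B$-sides are no longer disjoint). For $k\geq 3$ it may also meet a member of $L$ in a proper non-empty subset.

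For example, take $A=S\sqcup\{1,2,3,4\}$ with fibres $S\cup\{1\}\subset S\cup\{1,2\}\subset S\cup\{1,2,3\}$ and $S\cup\{2,4\}$. This is a $(3,1)$-semi-equation, and for suitable large $|S|$ all four fibres lie in one scale. The middle depth layer is $\{S\cup\{1,2\}\}$, and $S\cup\{2,4\}$ meets it only in $S\cup\{2\}$. So $R(U,B_{\mathrm{up}})$ has no evident decomposition into bounded-multiplicity boxes, and your recursion does not treat it at all. Nor is there any argument that restricting the upper part to $A\setminus U$ halves its height, since restriction can create new containments.

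Your fallback rests on a false claim. Take the fibres $\{1,\dots,j\}\subseteq[m]$, with $j$ ranging over the integers of one scale near the top. They form a chain of length $\Omega(m/k)$ in a $(2,1)$-semi-equation, hence a $(k,1)$-semi-equation. Lemma \ref{unionbound} says nothing about chains, because a chain has non-empty common intersection. Applying Dilworth locally at each point also does not produce the global chains that Lemma \ref{chainlemma} requires.

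The missing idea is to organise each scale by its \emph{minimal} fibres rather than by depth layers.
\begin{itemize}
\item Suppose $R_b$ contained $k$ distinct minimal fibres of its scale. These are pairwise incomparable, so their intersection is empty, and Lemma \ref{unionbound} would push $|R_b|$ above the top of the scale.
\item Hence each $R_b$ contains a set $\mathcal{D}$ of between $1$ and $k-1$ minimal fibres. Partition the scale into the classes $B^{s,\mathcal{D}}$ accordingly.
\item All fibres in a class contain a fixed non-empty $C_0\in\mathcal{D}$, so the class has \emph{global} width at most $k-1$. Dilworth then gives $k-1$ genuine chains, and Lemma \ref{chainlemma} applies verbatim inside $A^{s,\mathcal{D}}\times B^{s,\mathcal{D}}$, where $A^{s,\mathcal{D}}:=\bigcup_{b\in B^{s,\mathcal{D}}}R_b$.
\item Bounded multiplicity is needed only for these supports. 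If $\mathcal{D}_1,\dots,\mathcal{D}_k$ are distinct and of equal size, then $\bigcap_i A^{s,\mathcal{D}_i}=\emptyset$: a containment $R_{b_1}\subseteq R_{b_2}$ would force $R_{b_2}$ to contain every member of $\mathcal{D}_1\cup\mathcal{D}_2$. Therefore $\sum_{\mathcal{D}}|A^{s,\mathcal{D}}|\leq (k-1)^2m$.
\end{itemize}
That last bound is exactly where your multiplicity version of Lemma \ref{boxlemma}, which is correct, does its job.
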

Once again, Lemmas \ref{chainlemma} and \ref{boxlemma} will play an important role in the proof, analogous to their role in the proof of Proposition \ref{manynegszbound}. Given $A\subseteq X$ and $B\subseteq Y$ such that $(R\wedge R')(A,B)$ is $K_{t,t}$-free, we will use Lemma \ref{chainlemma} to decompose $A\times B$ into a small number of boxes $A'\times B'$, such that $R(A',B')$ is the union of a small number of sets which are unions of edge-disjoint boxes. We can then apply Lemma \ref{boxlemma} to obtain a bound on $|(R\wedge R')(A',B')|=|R'\cap (R(A',B'))|$; summing over $(A',B')$ gives a bound on $|(R\wedge R')(A,B)|$.
\begin{proof}[Proof of Proposition \ref{conjunctions}]
    Fix $t\in\N^+$ and finite non-empty $A\subseteq X$ and $B\subseteq Y$, such that $R\wedge R'(A,B)$ is $K_{t,t}$-free. Let $m:=|A|, n:=|B|$, and $c:=c(t,m,n)$. We claim that 
    \[|(R\wedge R')(A,B)|\leq 8k^4(\log^2(m+1))c\cdot (m+n).\]

    If $m\leq 8$ or $k=1$, we are done, so suppose $m\geq 9$ and $k\geq 2$. Let $l\in\N^+$ be minimal such that $(\frac{k}{k-1})^l>m$. For $s\in [l]$, let
    \begin{align*}
        B^s&:=\left\lbrace b\in B: \frac{m}{(\frac{k}{k-1})^s}<|R_b|\leq \frac{m}{(\frac{k}{k-1})^{s-1}}\right\rbrace,\\
        \mathcal{C}^s&:=\{R_b: b\in B^s\},\\
        \mathcal{C}^s_{\min}&:=\{C\in \mathcal{C}^s: C\text{ is minimal in }\mathcal{C}^s\text{ as a poset (under inclusion)}\}.
    \end{align*}
    Observe that $\{b\in B: R_b\neq \emptyset\}=\bigcup_{s\in [l]} B^s$. For $s\in [l]$ and $\mathcal{D}\subseteq\mathcal{C}^s_{\min}$, let
    \begin{align*}
        B^{s,\mathcal{D}}&:=\left\lbrace b\in B^s: \text{for all }C\in\mathcal{C}^s_{\min}, R_b\supseteq C\text{ if and only if }C\in\mathcal{D}\right\rbrace,\\
        A^{s,\mathcal{D}}&:=\bigcup_{b\in B^{s,\mathcal{D}}}R_b.
    \end{align*}

    We prove a series of claims.
    \begin{claim}\label{BsDdisjoint}
        For all $s\in [l]$,
        \[B^s=\bigsqcup_{\substack{\mathcal{D}\subseteq \mathcal{C}^s_{\min}\\1\leq |\mathcal{D}|\leq k-1}} B^{s,\mathcal{D}}.\]
    \end{claim}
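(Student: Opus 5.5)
Three things need checking: the sets $B^{s,\mathcal{D}}$ are pairwise disjoint for distinct $\mathcal{D}$, every $b\in B^s$ lies in one with $|\mathcal{D}|\geq 1$, and only $\mathcal{D}$ with $|\mathcal{D}|\leq k-1$ occur.

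\emph{Disjointness.} Given $b\in B^s$, the set $\mathcal{D}_b:=\{C\in\mathcal{C}^s_{\min}: C\subseteq R_b\}$ is uniquely determined by $b$. By definition, $b\in B^{s,\mathcal{D}}$ if and only if $\mathcal{D}=\mathcal{D}_b$. So the sets $B^{s,\mathcal{D}}$ are pairwise disjoint, and their union over all $\mathcal{D}\subseteq\mathcal{C}^s_{\min}$ is exactly $B^s$.

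\emph{Lower bound $|\mathcal{D}_b|\geq 1$.} Since $B$ is finite, $\mathcal{C}^s$ is a finite poset under inclusion. Hence $R_b\in\mathcal{C}^s$ contains some minimal element $C\in\mathcal{C}^s_{\min}$ with $C\subseteq R_b$, which gives $C\in\mathcal{D}_b$.

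\emph{Upper bound $|\mathcal{D}_b|\leq k-1$.} Suppose instead there are distinct $C_1,\dots,C_k\in\mathcal{D}_b$. Each $C_i$ is some fibre $R_{b_i}$ with $b_i\in B^s$, and the $C_i$ are distinct fibres.

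We claim $\bigcap_i C_i\neq\emptyset$. Each $C_i$ is a subset of $R_b$ and has size greater than $m/(\frac{k}{k-1})^s$. Suppose the intersection were empty. Then Lemma \ref{unionbound}, applied inside $R_b$ with $n:=\min_i|C_i|$, gives
\[|R_b|\geq\left|\bigcup_i C_i\right|\geq \tfrac{k}{k-1}\min_i|C_i|>\tfrac{k}{k-1}\cdot\frac{m}{(\frac{k}{k-1})^s}=\frac{m}{(\frac{k}{k-1})^{s-1}}.\]
This contradicts $b\in B^s$, so the intersection is non-empty.

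Now apply $(k,1)$-semi-equationality to the $k$ distinct fibres $C_1,\dots,C_k$, whose intersection is non-empty. Some $C_i\subseteq C_j$ with $i\neq j$, and since the fibres are distinct this containment is strict. That contradicts the minimality of $C_j$ in $\mathcal{C}^s$. Hence $|\mathcal{D}_b|\leq k-1$, and the claim follows.

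The only real work is this last step. It has to combine the dyadic-style size window defining $B^s$ with Lemma \ref{unionbound}, so as to force non-empty intersection before semi-equationality can be used.
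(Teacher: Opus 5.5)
Your proof is correct and is essentially the paper's argument. Disjointness and the lower bound are handled the same way. For the upper bound you use the same two ingredients: Lemma \ref{unionbound} together with the size window defining $B^s$, and $(k,1)$-semi-equationality set against minimality in $\mathcal{C}^s$. You apply them in contrapositive order, whereas the paper first uses semi-equationality on the pairwise incomparable minimal sets to force $C_1\cap\dots\cap C_k=\emptyset$ and then derives the size contradiction.
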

    \begin{proof}[Proof of Claim]
        The union is trivially a disjoint union by definition. Thus, it suffices to prove that for all $b\in B^s$, $1\leq |\{C\in\mathcal{C}^s_{\min}: R_b\supseteq C\}|\leq k-1$. For the lower bound, observe that since $R_b\in\mathcal{C}^s$, it must contain some $C\in\mathcal{C}^s_{\min}$. For the upper bound, suppose we have distinct $C_1, \dots, C_k\in\mathcal{C}^s_{\min}$ such that $R_b\supseteq C_1\cup \dots\cup C_k$. By $(k,1)$-semi-equationality, $C_1\cap\dots\cap C_k=\emptyset$, so by Lemma \ref{unionbound}, $|R_b|\geq |C_1\cup\dots\cup C_k|> \frac{k}{k-1}\cdot m/(\frac{k}{k-1})^s$, which is absurd.
    \end{proof}
    \begin{claim}\label{boxbound}
        Let $s\in [l]$ and $\mathcal{D}\subseteq\mathcal{C}^s_{\min}$ with $D\neq\emptyset$. Then
        \[|(R\wedge R')(A,B^{s,\mathcal{D}})|\leq c(k-1)(\lceil\log_2 m\rceil+1)(|A^{s,\mathcal{D}}|+|B^{s,\mathcal{D}}|).\]
    \end{claim}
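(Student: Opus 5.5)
Fix one $C_0\in\mathcal{D}$ and use it as a common point, to get a Dilworth decomposition into few chains. Then apply Lemma \ref{chainlemma} and Lemma \ref{boxlemma} chain by chain, working inside the smaller box $A^{s,\mathcal{D}}\times B^{s,\mathcal{D}}$ rather than $A\times B$.

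\emph{Step 1 (width bound).} Since $\mathcal{D}\neq\emptyset$, pick $C_0\in\mathcal{D}$. By definition of $B^{s,\mathcal{D}}$, every $b\in B^{s,\mathcal{D}}$ satisfies $R_b\supseteq C_0$. Moreover $C_0=R_{b_0}$ for some $b_0\in B^s$, so $|C_0|>m/(\frac{k}{k-1})^s>0$, and we may fix a point $a_0\in C_0$. Every fibre in $\mathcal{P}:=\{R_b: b\in B^{s,\mathcal{D}}\}$ then contains $a_0$. Any $k$ distinct members of $\mathcal{P}$ have non-empty intersection, so by $(k,1)$-semi-equationality two of them are comparable. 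Hence $\mathcal{P}$ has width at most $k-1$. By Dilworth's theorem we can write $B^{s,\mathcal{D}}=B^1\sqcup\dots\sqcup B^{k-1}$ (a disjoint union, after removing overlaps), where each $\{R_b:b\in B^i\}$ is a chain under inclusion.

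\emph{Step 2 (chain decomposition).} Each chain consists of subsets of $A$. A strictly increasing chain of non-empty subsets of an $m$-element set has at most $m\le 2^{\lceil\log_2 m\rceil}$ members. So by Lemma \ref{chainlemma} (with Remark \ref{remarkemptyset}), each $R(A,B^i)$ is a union $F^i_1\cup\dots\cup F^i_{\lceil\log_2 m\rceil+1}$ of sets, each a union of edge-disjoint boxes. Since $R_b\subseteq A^{s,\mathcal{D}}$ for every $b\in B^i$, we have $R(A,B^i)=R(A^{s,\mathcal{D}},B^i)$. We may therefore regard each $F^i_j$ as a union of edge-disjoint boxes inside $A^{s,\mathcal{D}}\times B^i$; the construction in Lemma \ref{chainlemma} only uses rows from the fibres, so this is legitimate.

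\emph{Step 3 (counting).} We have $(R\wedge R')(A,B^i)=R'\cap R(A^{s,\mathcal{D}},B^i)\subseteq\bigcup_j (R'\cap F^i_j)$. Each $R'\cap F^i_j$ is contained in $(R\wedge R')(A,B)$, which is $K_{t,t}$-free. Apply Lemma \ref{boxlemma} to $R'$ with ground sets $A^{s,\mathcal{D}}$ and $B^i$. Using that $c$ is increasing and $|A^{s,\mathcal{D}}|\le m$, $|B^i|\le n$, this gives $|R'\cap F^i_j|\le c\,(|A^{s,\mathcal{D}}|+|B^i|)$. Summing over $j$ gives $(\lceil\log_2 m\rceil+1)\,c\,(|A^{s,\mathcal{D}}|+|B^i|)$. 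Summing over $i\in[k-1]$, and using that the $B^i$ are disjoint, gives at most $(\lceil\log_2 m\rceil+1)\,c\,\bigl((k-1)|A^{s,\mathcal{D}}|+|B^{s,\mathcal{D}}|\bigr)\le c(k-1)(\lceil\log_2 m\rceil+1)(|A^{s,\mathcal{D}}|+|B^{s,\mathcal{D}}|)$, as claimed.

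The only delicate point is Step 1: the width bound needs a genuine common point of all fibres. This is why the claim requires $\mathcal{D}\neq\emptyset$, and why the non-emptiness of members of $\mathcal{C}^s$ (from the lower bound on $|R_b|$ for $b\in B^s$) must be used. A secondary point is confining the boxes to $A^{s,\mathcal{D}}$, so that the bound involves $|A^{s,\mathcal{D}}|$ rather than $m$; this is needed later when summing over $\mathcal{D}$.
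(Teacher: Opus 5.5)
Your proof is correct and follows essentially the same route as the paper. You fix $C_0\in\mathcal{D}$ to bound the width of $\{R_b : b\in B^{s,\mathcal{D}}\}$ by $k-1$, split it into at most $k-1$ chains by Dilworth, decompose each $R(A^{s,\mathcal{D}},B^i)$ into $\lceil\log_2 m\rceil+1$ unions of edge-disjoint boxes via Lemma \ref{chainlemma}, and bound each piece with Lemma \ref{boxlemma}. Your extra care just makes explicit points the paper leaves implicit: noting $C_0\neq\emptyset$ since $|C_0|>m/(\frac{k}{k-1})^s$, using monotonicity of $c$ for the sub-box, and taking the $B^i$ disjoint to get a slightly sharper intermediate bound.
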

    \begin{proof}[Proof of Claim]
        Fix any $C_0\in\mathcal{D}$. The elements of $\{R_b: b\in B^{s,\mathcal{D}}\}$ all contain $C_0$, so it has width at most $k-1$ as a poset (under inclusion): given $k$ distinct elements, their intersection contains $C_0$ so is non-empty, and thus there must be two comparable elements by $(k,1)$-semi-equationality. By Dilworth's theorem, it is the union of at most $k-1$ chains.

        That is, we may write $B^{s,\mathcal{D}}=B_1\cup\dots\cup B_{k-1}$, such that for all $i\in [k-1]$, $\{R_b: b\in B_i\}\setminus \{\emptyset\}$ is a chain of size at most $m$ (since $|A|=m$). By Lemma \ref{chainlemma} (and Remark \ref{remarkemptyset}), for all $i\in [k-1]$, $R(A^{s,\mathcal{D}},B_i)$ can be written as a union $F_{i,1}\cup\dots\cup F_{i,(\lceil\log_2 m\rceil+1)}$, where each $F_{i,j}$ is a union of edge-disjoint boxes. Note that
        \[(R\wedge R')(A,B^{s,\mathcal{D}})=(R\wedge R')(A^{s,\mathcal{D}},B^{s,\mathcal{D}})=\bigcup_{i\in [k-1]}\bigcup_{j\in [\lceil\log_2 m\rceil+1]} R'\cap F_{i,j}.\]
        For each $i\in [k-1]$ and $j\in [\lceil\log_2 m\rceil+1]$, $|R'\cap F_{i,j}|\leq c\cdot (|A^{s,\mathcal{D}}|+|B^{s,\mathcal{D}}|)$ by Lemma \ref{boxlemma}. Summing over $i,j$ gives the desired bound.
    \end{proof}
    \begin{claim}\label{Abound}
            Let $s\in [l]$. If $\mathcal{D}_1, \dots, \mathcal{D}_k\subseteq\mathcal{C}^s_{\min}$ are distinct and of the same finite size, then $\bigcap_{i\in [k]}A^{s,\mathcal{D}_i}=\emptyset$. Therefore,
            \begin{equation}
                \sum_{\substack{\mathcal{D}\subseteq \mathcal{C}^s_{\min}\\1\leq |\mathcal{D}|\leq k-1}} \abs{A^{s,\mathcal{D}}}\leq (k-1)^2m.
            \end{equation}
        \end{claim}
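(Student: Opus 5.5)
We first prove the intersection property and then deduce the sum bound from it.

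Suppose $\mathcal{D}_1,\dots,\mathcal{D}_k\subseteq\mathcal{C}^s_{\min}$ are distinct and all of size $r$, and suppose for contradiction that some $a\in\bigcap_{i\in[k]}A^{s,\mathcal{D}_i}$. For each $i$, choose $b_i\in B^{s,\mathcal{D}_i}$ with $a\in R_{b_i}$. The sets $B^{s,\mathcal{D}_i}$ are pairwise disjoint by Claim \ref{BsDdisjoint}, so the $b_i$ are distinct.

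Next we check that the fibres $R_{b_i}$ are pairwise distinct. The set of minimal elements of $\mathcal{C}^s$ contained in $R_{b_i}$ is exactly $\mathcal{D}_i$. If $R_{b_i}=R_{b_j}$, then $\mathcal{D}_i=\mathcal{D}_j$, so $i=j$.

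These $k$ distinct fibres all contain $a$, so by $(k,1)$-semi-equationality there are $i\neq j$ with $R_{b_i}\subseteq R_{b_j}$. Every $C\in\mathcal{D}_i$ then satisfies $C\subseteq R_{b_i}\subseteq R_{b_j}$, so $C\in\mathcal{D}_j$. Hence $\mathcal{D}_i\subseteq\mathcal{D}_j$. Since $|\mathcal{D}_i|=|\mathcal{D}_j|$ and these sets are finite, $\mathcal{D}_i=\mathcal{D}_j$, contradicting distinctness. This proves the first statement.

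For the sum bound, fix $r\in[k-1]$. By the first part, each $a\in A$ lies in at most $k-1$ of the sets $A^{s,\mathcal{D}}$ with $|\mathcal{D}|=r$. Double counting therefore gives
\[\sum_{|\mathcal{D}|=r}\abs{A^{s,\mathcal{D}}}=\sum_{a\in A}\#\{\mathcal{D}:|\mathcal{D}|=r,\ a\in A^{s,\mathcal{D}}\}\leq(k-1)m.\]
Summing over $r=1,\dots,k-1$ yields $(k-1)^2m$.

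The only point needing care is that comparability of the fibres forces comparability of the sets $\mathcal{D}_i$. This works because each $\mathcal{D}_i$ is defined as exactly the set of minimal elements contained in the fibre, so it is upward-closed along inclusion of fibres. Distinctness of the fibres is needed for the semi-equation hypothesis to apply, and it is also immediate from this same definition.
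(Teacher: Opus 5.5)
Your proof is correct and follows essentially the same route as the paper: choose $b_i\in B^{s,\mathcal{D}_i}$ with a common point $a$, use $(k,1)$-semi-equationality to get $R_{b_i}\subseteq R_{b_j}$ for some $i\neq j$, deduce $\mathcal{D}_i\subseteq\mathcal{D}_j$ and hence $\mathcal{D}_i=\mathcal{D}_j$ by equal finite size, then bound each fixed-size layer by $(k-1)m$ and sum over the $k-1$ layers. Your extra check that the fibres $R_{b_i}$ are pairwise distinct is harmless but unnecessary, since the formal definition of a $(k,1)$-semi-equation applies to arbitrary $k$-tuples, and equal fibres would force $\mathcal{D}_i=\mathcal{D}_j$ anyway.
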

        \begin{proof}[Proof of Claim]
            Suppose $\bigcap_{i\in [k]}A^{s,\mathcal{D}_i}\neq \emptyset$, so there is $b_i\in B^{s,\mathcal{D}_i}$ for each $i\in [k]$ such that $\bigcap_{i\in [k]}R_{b_i}\neq\emptyset$. By $(k,1)$-semi-equationality, without loss of generality, we have $R_{b_1}\subseteq R_{b_2}$. But then $R_{b_2}\supseteq \bigcup_{C\in\mathcal{D}_1\cup\mathcal{D}_2}C$, and $\mathcal{D}_1\cup\mathcal{D}_2\neq \mathcal{D}_2$ since $\mathcal{D}_1$ and $\mathcal{D}_2$ are distinct and of the same finite size. This contradicts the fact that $b_2\in B^{s,\mathcal{D}_2}$.

            This shows that, for all $i\in [k-1]$,
            \[\sum_{\substack{\mathcal{D}\subseteq \mathcal{C}^s_{\min}\\|\mathcal{D}|=i}} \abs{A^{s,\mathcal{D}}}\leq (k-1)m.\]
            Summing this over $i\in [k-1]$ gives the desired bound.
        \end{proof}
    Thus, for all $s\in [l]$,
    \begin{alignat*}{2}
        &|(R\wedge R')(A,B^s)|\\
        &=\sum_{\substack{\mathcal{D}\subseteq \mathcal{C}^s_{\min}\\1\leq |\mathcal{D}|\leq k-1}} \abs{(R\wedge R')(A,B^{s,\mathcal{D}})}&&\;\;\;\;\;\;\text{by Claim \ref{BsDdisjoint}}\\
        &\leq \sum_{\substack{\mathcal{D}\subseteq \mathcal{C}^s_{\min}\\1\leq |\mathcal{D}|\leq k-1}} c(k-1)(\lceil\log_2 m\rceil+1)(|A^{s,\mathcal{D}}|+|B^{s,\mathcal{D}}|)&&\;\;\;\;\;\;\text{by Claim \ref{boxbound}}\\
        &\leq c(k-1)^3(\lceil\log_2 m\rceil+1)(m+n)&&\;\;\;\;\;\;\text{by Claims \ref{BsDdisjoint} and \ref{Abound}}.
    \end{alignat*}
    Since $\{b\in B: R_b\neq \emptyset\}=\bigcup_{s\in [l]} B^s$, summing the above over $s\in [l]$, we obtain
    \begin{align*}
        |(R\wedge R')(A,B)|&\leq c(k-1)^3l(\lceil\log_2 m\rceil+1)(m+n)\\
        &\leq c(k-1)^3(\log_{k/(k-1)}m+1)(\lceil\log_2 m\rceil+1)(m+n)\\
        &\leq 4c(k-1)^3\left(\log\frac{k}{k-1}\right)^{-1}(\log^2(m+1)) (m+n)\\
        &\leq 4c(k-1)^3(2k-1)(\log^2(m+1)) (m+n)\\
        &\leq 8ck^4(\log^2(m+1)) (m+n)
    \end{align*}
where, in the penultimate line, we used the fact that $\log x\geq \frac{x-1}{x+1}$ for all $x\in\R_{\geq 1}$.
\end{proof}
\begin{cor}\label{conjunctionscor}
    For $d\in\N^+$, let $R^{(1)}, \dots, R^{(d)}\subseteq X\times Y$ be $(k,1)$-semi-equations, and let $R=R^{(1)}\wedge \dots \wedge R^{(d)}$. Then
    \[z_t^R(m,n)\leq 8^{d-1}k^{4d-3}(t-1)^2(m+n)\log^{2d-2}(m+1).\]
\end{cor}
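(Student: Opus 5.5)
We argue by induction on $d\in\N^+$, using Proposition \ref{(k,1)zbound} for the base case and Proposition \ref{conjunctions} for the inductive step.

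\emph{Base case.} For $d=1$, Proposition \ref{(k,1)zbound} gives
\[z_t^{R^{(1)}}(m,n)\leq (k-1)(t-1)^2m+(t-1)n.\]
When $t\geq 2$ and $k\geq 1$ this is at most $k(t-1)^2(m+n)$, which is the claimed bound $8^0k^{1}(t-1)^2(m+n)\log^0(m+1)$. When $t=1$ both sides are $0$: a nonempty relation contains $K_{1,1}$, so $z_1^R=0$.

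\emph{Inductive step.} Suppose the bound holds for $d$, and let $R'=R^{(1)}\wedge\dots\wedge R^{(d)}$. Define
\[c(t,m,n):=8^{d-1}k^{4d-3}(t-1)^2\log^{2d-2}(m+1).\]
This function is increasing in $t$, $m$ and $n$, and by the inductive hypothesis $z_t^{R'}(m,n)\leq c(t,m,n)(m+n)$.

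Proposition \ref{conjunctions} asks for $c$ to take values in $\R_{\geq 1}$, which fails when $t=1$ (where $c=0$), and can also fail for small $m$ because of the $\log$ factor. I would handle this in one of two ways:
\begin{itemize}
\item treat $t=1$ separately, where the bound is trivial as in the base case; or
\item observe that the proof of Proposition \ref{conjunctions} only uses $c\geq 1$ in the case $m\leq 8$.
\end{itemize}
In that small case, $(R\wedge R')(A,B)\subseteq R'(A,B)$ is $K_{t,t}$-free, so it can be bounded directly by the inductive hypothesis, since $8k^4\log^2(m+1)\geq 1$ for $m\geq 2$. The case $m=1$ needs a separate check: with a single row, a $K_{t,t}$-free graph has at most $t-1$ edges once $t\geq 2$. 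So I would either re-run that proof with $\max(c,1)$ or verify the small cases directly.

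Applying Proposition \ref{conjunctions} to $R^{(d+1)}$ and $R'$ then gives
\[z_t^{R}(m,n)\leq 8k^4\log^2(m+1)\cdot c(t,m,n)\,(m+n)=8^{d}k^{4d+1}(t-1)^2(m+n)\log^{2d}(m+1),\]
which is exactly the claimed bound with $d+1$ in place of $d$.

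\emph{Main obstacle.} The only delicate point is the hypothesis $c\geq 1$ in Proposition \ref{conjunctions}. The exponents and constants combine directly: $k$ gains $4$, the power of $8$ gains $1$, and the power of $\log$ gains $2$. So the bulk of the care goes into the boundary cases $t=1$ and small $m$, which are handled by the trivial bounds noted above.
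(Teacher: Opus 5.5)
Your argument is the paper's: the base case is Proposition \ref{(k,1)zbound}, and each inductive step is one application of Proposition \ref{conjunctions}. Your check that $(k-1)(t-1)^2m+(t-1)n\leq k(t-1)^2(m+n)$ is right, and the constants combine exactly as you say. You are also right that the hypothesis $c\geq 1$ fails at $t=1$, a point the paper passes over; there $z_1^R\equiv 0$ and nothing needs proving.

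Your worry about small $m$ is unfounded, however, and the fallback arguments you give for it are wrong and should be dropped.
\begin{itemize}
\item For $t\geq 2$ and every $m\geq 1$ you have $c(t,m,n)\geq (8\log^2 2)^{d-1}\geq 1$, since $8\log^2 2\approx 3.84$. So once $t=1$ is set aside, Proposition \ref{conjunctions} applies verbatim. Equivalently, apply it with $\max(c,1)$, which is increasing and agrees with $c$ for $t\geq 2$.
\item Knowing that $(R\wedge R')(A,B)$ is $K_{t,t}$-free does not let you invoke the bound on $z_t^{R'}$. That bound only controls $|R'(A,B)|$ when $R'(A,B)$ itself is $K_{t,t}$-free, and $z_t$ is not monotone under passing to subrelations. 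For example, the full relation has $z_t\leq (t-1)(m+n)$, while its subrelations can be arbitrary. The paper's proof of Proposition \ref{conjunctions} handles $m\leq 8$ by the trivial bound $|(R\wedge R')(A,B)|\leq mn\leq 8\log^2(m+1)\,n$, and this is precisely where $c\geq 1$ is used.
\item A graph with a single row ($|A|=1<t$) is automatically $K_{t,t}$-free and can have $n$ edges, not at most $t-1$. No separate $m=1$ check is needed anyway, since $8k^4\log^2 2\geq 1$.
\end{itemize}
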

\begin{proof}
    First apply Proposition \ref{(k,1)zbound}, and then apply Proposition \ref{conjunctions} $d-1$ times.
\end{proof}
\begin{remark}\label{avoidoneequation}
    To derive a Zarankiewicz bound for $R^{(1)}\wedge \dots \wedge R^{(d)}$, one can avoid treating the case $d=1$ (Proposition \ref{(k,1)zbound}) separately by applying Proposition \ref{conjunctions} $d$ times, starting with $R'$ as the full relation (Lemma \ref{fullrelation}); this would produce a Zarankiewicz bound of\linebreak $O_{t,d,k}((m+n)\log^{2d}(m+1))$.
\end{remark}
We put everything together to conclude our main theorem, Theorem \ref{mainthm}.
\begin{proof}[Proof of Theorem \ref{mainthm}]
    The cases where $d'=0$ are handled by Lemma \ref{fullrelation} and Corollary \ref{conjunctionscor}. When $d'>0$, first apply Proposition \ref{manynegszbound} to get a Zarankiewicz bound for $\neg R^{(-1)}\wedge \dots \wedge \neg R^{(-d')}$, and then apply Proposition \ref{conjunctions} $d$ times. The final statement now follows from Lemma \ref{disjunctions} (using disjunctive normal form) and Fact \ref{ALZBnofield}.
\end{proof}
We have shown that every Boolean combination of 1-semi-equations has almost linear Zarankiewicz bounds. Note that this fails for $n$-semi-equations when $n\geq 2$, even for a single $n$-semi-equation.
\begin{prop}
    For all $k>n\geq 2$, there is a $(k,n)$-semi-equation without almost linear Zarankiewicz bounds.
\end{prop}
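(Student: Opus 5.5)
The plan is to use the point-line incidence relation $I$ over a field such as $\mathbb{R}$ (or $\mathbb{Q}$). Remark \ref{pointlineis(3,2)} shows $I$ is a $(3,2)$-semi-equation. By Remark \ref{rmksemieqn}(ii), a $(3,2)$-semi-equation is a $(k',n')$-semi-equation whenever $3\leq k'$ and $2\leq n'<k'$. So for every $k>n\geq 2$, $I$ is a $(k,n)$-semi-equation. It therefore suffices to show that $I$ does not have almost linear Zarankiewicz bounds.

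For this step I would cite the lower bound for the Szemerédi--Trotter problem. Take $P_0=[N]\times[N^2]\subseteq\mathbb{Q}^2$, and let $L_0$ be the lines $y=ax+b$ with $a\in[N]$ and $b\in[N^2]$. Each such line contains at least $N/2$ points of $P_0$: for $x\leq N/2$ we get $ax+b\leq N^2/2+N^2$. This overshoots $[N^2]$, so I will adjust the ranges. The standard choice is $P_0=[N]\times[2N^2]$, with $a\in[N]$ and $b\in[N^2]$; then every $x\in[N]$ gives $y=ax+b\leq 2N^2$.

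This gives $|P_0|=2N^3$ and $|L_0|=N^3$, with at least $N^4$ incidences. Adding dummy lines, or trimming $P_0$ to size $N^3$ by taking $[N]\times[N^2]$ with $b\leq N^2/2$ and $a\leq N/2$, yields a balanced configuration with $n\asymp N^3$ and $\Omega(n^{4/3})$ incidences.

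Incidence graphs are $K_{2,2}$-free, since two distinct points lie on at most one common line. Hence $z_2^I(n,n)=\Omega(n^{4/3})$, and for $t\geq 2$ we have $z_t^I\geq z_2^I$. Taking $\varepsilon=1/6$ then contradicts $z_2^I(n,n)=O(n^{1+\varepsilon})$.

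The only mildly fiddly step is getting $|A|=|B|=n$ exactly, as the definition requires. I would handle this by padding the smaller side with extra points or lines. Padding does not create $K_{2,2}$ and only increases $n$ by a constant factor, and monotonicity of $z_t^I$ in $n$ then handles the intermediate values of $n$. No real obstacle is expected; the content lies in Remark \ref{pointlineis(3,2)} and the classical $n^{4/3}$ construction, which the paper already alludes to when noting that incidence over $\mathbb{Q}$ lacks almost linear bounds.
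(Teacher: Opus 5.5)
Your proof is correct. Its skeleton is the paper's: reduce to $k=3$, $n=2$ via Remark \ref{rmksemieqn}(ii), take the point-line incidence relation $I$ (a $(3,2)$-semi-equation by Remark \ref{pointlineis(3,2)}), and use that $I$ is $K_{2,2}$-free.

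Where you differ is the field and the lower-bound construction. The paper works over $\mathbb{F}_p^{\mathrm{alg}}$ and takes all points and all non-vertical lines of the finite plane $\mathbb{F}_{p^m}^2$. There are $p^{2m}$ of each and $p^{3m}$ incidences, so the configuration is already balanced. It gives $z_2^I(n,n)=\Omega(n^{3/2})$, which matches the K\H{o}v\'ari--S\'os--Tur\'an upper bound for $t=2$.

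You instead work over $\mathbb{Q}$ with the classical grid example: $[N]\times[2N^2]$ against the lines $y=ax+b$ with $a\in[N]$, $b\in[N^2]$. This needs padding to balance the two sides and only yields $\Omega(n^{4/3})$. That is necessarily so, since by Szemer\'edi--Trotter $n^{4/3}$ is the true order over $\mathbb{R}$. In exchange, your example shows the failure already occurs in characteristic zero, whereas the paper's lives in positive characteristic. Since either exponent exceeds $1$, both arguments refute almost linear bounds.

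Two small remarks. First, your alternative "trimming" option ($a\leq N/2$, $b\leq N^2/2$) gives $N^3$ points but only $N^3/4$ lines, so it is not balanced without padding either; the padding route you also give is the one that works. Second, to contradict $z_2^I(n,n)=O(n^{1+\varepsilon})$ it suffices to have the lower bound along the infinite sequence $n=2N^3$. So the monotonicity step for intermediate $n$ is unnecessary, though it is correct, because $I$ is globally $K_{2,2}$-free.
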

\begin{proof}
    By Remark \ref{rmksemieqn}(ii), it suffices to exhibit a $(3,2)$-semi-equation without almost linear Zarankiewicz bounds. Let $I$ be the point-line incidence relation over $\mathbb{F}_p^{\text{alg}}$, for any prime $p$; by Remark \ref{pointlineis(3,2)}, $I$ is a $(3,2)$-semi-equation. It is $K_{2,2}$-free, as two distinct lines cannot intersect in two distinct points.
    
    The following construction is well-known. For $m\in\N^+$, let $P_m$ and $L_m$ respectively be the set of points and non-vertical lines in $\mathbb{F}_{p^m}^2$ (note that $\mathbb{F}_{p^m}\subseteq \mathbb{F}_p^{\text{alg}}$). The reader is invited to check that $|P_m|=|L_m|=p^{2m}$ and $|I(P_m, L_m)|=p^{3m}$, and so $z_2^I(n,n)=\Omega(n^{3/2})$.
\end{proof}
\section{Strong Erd\H{o}s--Hajnal}
In this section, we prove that (Boolean combinations of) $1$-semi-equations satisfy the \textit{strong Erd\H{o}s--Hajnal property}.
\begin{defn}\label{defnsEH}
    Let $R\subseteq X\times Y$. For $\delta>0$, say that $R$ satisfies the \textit{$\delta$-strong Erd\H{o}s--Hajnal property ($\delta$-sEH)} if, for all finite $A\subseteq X$ and $B\subseteq Y$, there are $A_0\subseteq A$ and $B_0\subseteq B$ with $|A_0|\geq \delta|A|$ and $|B_0|\geq \delta|B|$ such that $R(A_0, B_0)$ is homogeneous. Say that $R$ satisfies the \textit{strong Erd\H{o}s--Hajnal property (sEH)} if it satisfies $\delta$-sEH for some $\delta>0$.
\end{defn}
Like almost linear Zarankiewicz bounds, the strong Erd\H{o}s--Hajnal property also implies the non-interpretability of certain fields, but only those of positive characteristic. Indeed, as stated in the introduction, a theory in which every definable binary relation satisfies sEH does not interpret an infinite field of positive characteristic, but it can interpret one of zero characteristic, as shown by the theory of the real ordered field.

Unlike almost linear Zarankiewicz bounds, the strong Erd\H{o}s--Hajnal property has very good closure properties under Boolean combinations. It is easy to check that, given relations $R_1, R_2\subseteq X\times Y$ such that $R_i$ satisfies $\delta_i$-sEH for $i\in [2]$, we have that $\neg R_1$ satisfies $\delta_1$-sEH, while $R_1\wedge R_2$ and $R_1\vee R_2$ satisfy $(\delta_1\delta_2)$-sEH. Thus, although the following result is stated for $(k,1)$-semi-equations, given any relation $R$ that is a Boolean combination of $(k,1)$-semi-equations, the result can be bootstrapped to yield an effective constant $\delta>0$ for which $R$ satisfies $\delta$-sEH.
\begin{theorem}\label{mainthmsEH}
    For all $k\in\N^+$, if $R\subseteq X\times Y$ is a $(k,1)$-semi-equation, then $R$ satisfies $\delta$-sEH for $\delta=1/6^{k-1}$.
\end{theorem}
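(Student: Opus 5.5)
We argue by induction on $k$, the target statement being that every $(k,1)$-semi-equation satisfies $1/6^{k-1}$-sEH. For $k=1$, Remark \ref{rmksemieqn}(i) gives $R=\emptyset$, so $A_0=A$ and $B_0=B$ work. Assume the claim for $k-1$, let $R$ be a $(k,1)$-semi-equation, and fix finite $A\subseteq X$, $B\subseteq Y$ with $m:=|A|$ and $n:=|B|$. The target is either a homogeneous box of density $\geq 1/6$ in each coordinate, or a sub-box $A'\times B'$ with $|A'|\geq m/6$ and $|B'|\geq n/6$ on which $R$ restricts to a $(k-1,1)$-semi-equation. The induction hypothesis then gives density $1/6\cdot 1/6^{k-2}=1/6^{k-1}$.

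The natural mechanism for dropping from $k$ to $k-1$ is \emph{maximal fibres}. Let $M\subseteq B$ consist of those $b$ whose fibre $R_b\cap A$ is maximal (under inclusion) among $\{R_{b'}\cap A: b'\in B\}$. Fix any $a\in A$ and distinct fibres $R_{b_1},\dots,R_{b_{k-1}}$ containing $a$, together with some $b$ such that $R_b\cap A$ properly contains one of them and is not among them. Then $k$ fibres share $a$, so two of them are comparable. Iterating this argument, one shows that on the set of $b\in B$ whose restricted fibre is \emph{not} maximal, the trace of $R$ behaves like a $(k-1,1)$-semi-equation when restricted to suitable $A'$. This is the step I would set up carefully in the following form. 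If $A'\subseteq R_{b^*}$ for a fixed $b^*$, and $B'$ consists of $b$ with $R_b\cap A'$ not containing $A'$, then any $k-1$ distinct fibres on $A'$ with common point, together with $R_{b^*}\cap A'=A'$, give $k$ fibres with a common point. The comparable pair cannot involve $A'$, since $A'$ is the whole set and no other fibre in the family equals it. So one of the $k-1$ original fibres is contained in another, and $R\cap(A'\times B')$ is a $(k-1,1)$-semi-equation.

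So I aim to find $b^*$ and a large $A'\subseteq R_{b^*}\cap A$ together with a large $B'$. There are three cases. First, if at least $n/6$ of the $b$ have $|R_b\cap A|\leq m/2$ and at least $n/6$ have $|R_b\cap A|\geq m/2$, or more generally if the fibres are spread out, use an averaging argument in the style of Martin-Pizarro--Ziegler. Second, if many fibres are small, say $|R_b\cap A|<m/3$ for at least half of the $b$: choose a point $a$ lying in few fibres, or pair up greedily to find $A_0$ of size $\geq m/6$ anti-complete to $n/6$ of the $b$. Third, if many fibres are large ($\geq m/3$): pick $b^*$ with large fibre that is minimal among large ones, set $A'=R_{b^*}\cap A$ with $|A'|\geq m/3$, and split $B$ according to whether $R_b\supseteq A'$. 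If at least $n/6$ of the $b$ satisfy $R_b\supseteq A'$, the box $A'\times\{\text{those }b\}$ is complete. Otherwise we pass to the $(k-1,1)$ situation above, with $B'$ of size $\geq n/6$ up to constants.

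The main obstacle is the bookkeeping that makes the constant exactly $1/6$ per step. In particular, case two (many small fibres giving a large anti-complete box) needs a real argument: if fibres have size $<m/3$, a random or averaging choice of $A_0$ of size $m/6$ must avoid a constant fraction of fibres entirely, which is not automatic. The first attempt I would make is to handle it by symmetry. Apply the same dichotomy to the complement via the dual structure, or choose $b^*$ to maximise $|R_{b^*}\cap A|$ and use maximality to control the fibres meeting $A'$. If that fails, I would fall back to iterating the minimal-large-fibre argument inside $A\setminus R_{b^*}$ and pigeonhole over at most $3$ such disjoint pieces, which is where the constant $6=2\cdot 3$ should come from.
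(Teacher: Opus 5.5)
Your proposal has a genuine gap in the small-fibre case, and the reduction mechanism you rely on is also wrong as stated.

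\textbf{The reduction mechanism.} Take $A'\subseteq R_{b^*}$ and $B'=\{b: A'\not\subseteq R_b\}$. The $k$ fibres $R_{b^*},R_{b_1},\dots,R_{b_{k-1}}$ can be resolved by a pair $R_{b_i}\subseteq R_{b^*}$. Such a pair tells you nothing about the $b_i$ among themselves. After restriction, $R_{b_i}\cap A'\subseteq A'$ always holds, so the comparable pair may well involve $A'$.

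For a concrete failure, let $k=2$ and take the laminar family $R_{b^*}=\{1,2,3\}$, $R_{b_1}=\{1\}$, $R_{b_2}=\{2\}$. With $A'=\{1,2,3\}$ and $B'=\{b_1,b_2\}$, the restriction is non-empty, so it is not a $(1,1)$-semi-equation.

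What you need is that each $b\in B'$ is incomparable with $b^*$ in \emph{both} directions. In your third case this can be arranged. Take $b^*$ minimal among the large fibres, and split only the large-fibre $b$'s rather than all of $B$. Then a large $R_b\subseteq R_{b^*}$ forces $R_b=R_{b^*}$. With that repair, your third case is exactly the paper's Case 2: the complete box $R_{b^*}\times B_1$ versus the restriction to $R_{b^*}\times B_2$.

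\textbf{The small-fibre case.} The case where at least half the fibres are small is left open, as you acknowledge, and none of your fallbacks is an argument. Your first (spread-out) case is unnecessary, since "at least half small" versus "at least half large" is already exhaustive.

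You had the right ingredient, maximal fibres, but used the witness the wrong way round. A $b$ whose fibre properly contains some $R_{b_i}$ supplies a comparable pair for free and gives no information. The paper instead restricts to a \emph{union} of maximal fibres, as follows.
\begin{enumerate}[(i)]
\item Set the threshold at $|A|/6$. Let $B'=\{b:|R_b|\le|A|/6\}$ and $A'=\bigcup_{b\in B'}R_b$. If $|A'|\le|A|/2$, then $(A\setminus A')\times B'$ is anti-complete.
\item Otherwise, the maximal members of $\{R_b:b\in B'\}$ cover $A'$, and each has size at most $|A'|/3$. So you can greedily choose a set $\tilde B$ of them whose union $\tilde A$ satisfies $|A'|/3\le|\tilde A|\le 2|A'|/3$.
\item If at least half of $B'$ has $R_b\subseteq\tilde A$, those $b$ are anti-complete to $A'\setminus\tilde A$.
\item Otherwise, let $B_2=\{b\in B':R_b\not\subseteq\tilde A\}$. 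The restriction of $R$ to $\tilde A\times B_2$ is a $(k-1,1)$-semi-equation. Indeed, a common point of $R_{b_1},\dots,R_{b_{k-1}}$ in $\tilde A$ lies in some $R_b$ with $b\in\tilde B$. This witness depends on the point, and it is comparable with no $R_{b_i}$: $R_{b_i}\not\subseteq R_b$ because $R_b\subseteq\tilde A$ and $R_{b_i}\not\subseteq\tilde A$. Also $R_b\subseteq R_{b_i}$ would force $R_b=R_{b_i}$ by maximality, which is impossible for the same reason.
\end{enumerate}
Since $|\tilde A|\ge|A|/6$ and $|B_2|\ge|B|/4$, the induction hypothesis yields the factor $1/6$ per step.
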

\begin{proof}
    Induct on $k$. When $k=1$, $R=\emptyset$ by Remark \ref{rmksemieqn}(i), so $R$ satisfies $1$-sEH. Suppose now $k>1$; let $R\subseteq X\times Y$ be a $(k,1)$-semi-equation, and let $A\subseteq X$, $B\subseteq Y$ be finite and non-empty.\\
    \\
    \underline{Case 1: $|\{b\in B: |R_b|\leq \frac{1}{6}|A|\}|\geq \frac{1}{2}|B|$.}

    Writing $B':=\{b\in B: |R_b|\leq \frac{1}{6}|A|\}$, we have $|B'|\geq \frac{1}{2}|B|$. Let $A':=\bigcup_{b\in B'}R_b$. If $|A'|\leq \frac{1}{2}|A|$ then we are done, as $R(A\setminus A', B')$ is anti-complete with $|A\setminus A'|\geq \frac{1}{2}|A|$. Thus, we may assume that $|A'|\geq \frac{1}{2}|A|$.

    Let $B^{\max}$ be the set of $b\in B'$ such that $R_b$ is maximal in the poset $\{R_{b'}: b'\in B'\}$ (under inclusion). Observe that $A'=\bigcup_{b\in B^{\max}}R_b$. As $|R_b|\leq \frac{1}{6}|A|\leq \frac{1}{3}|A'|$ for all $b\in B^{\max}$, there is some $\tilde{B}\subseteq B^{\max}$ such that $\tilde{A}:=\bigcup_{b\in \tilde{B}} R_b$ satisfies $|\tilde{A}|/|A'|\in [\frac{1}{3},\frac{2}{3}]$; in particular, $|\tilde{A}|\geq \frac{1}{6}|A|$. Write $B'=B_1\cup B_2$, where $B_1:=\{b\in B': R_b\subseteq \tilde{A}\}$ and $B_2:=B'\setminus B_1$. If $|B_1|\geq \frac{1}{2}|B'|$ then we are done, as $R(A'\setminus \tilde{A}, B_1)$ is anti-complete with $|A'\setminus \tilde{A}|\geq \frac{1}{3}|A'|\geq \frac{1}{6}|A|$ and $|B_1|\geq \frac{1}{2}|B'|\geq \frac{1}{4}|B|$. Thus, we may assume that $|B_2|\geq \frac{1}{2}|B'|\geq \frac{1}{4}|B|$.

    Let $\tilde{R}$ be the restriction of $R$ to $\tilde{A}\times B_2$; we claim that $\tilde{R}$ is a $(k-1,1)$-semi-equation. Indeed, suppose $b_1, \dots, b_{k-1}\in B_2$ are such that $\bigcap_{i\in [k-1]} \tilde{R}_{b_i}\neq\emptyset$, so there is $b\in \tilde{B}$ such that $R_b\cap \bigcap_{i\in [k-1]} R_{b_i}\neq\emptyset$. Since $R$ is a $(k,1)$-semi-equation, either there is $i\in [k-1]$ such that $R_{b_i}\subseteq R_b$ or $R_b\subsetneq R_{b_i}$, or there are distinct $i,j\in [k-1]$ such that $R_{b_i}\subseteq R_{b_j}$ (and so $\tilde{R}_{b_i}\subseteq \tilde{R}_{b_j}$). The latter must hold, since the former cannot: indeed, $R_{b_i}\not\subseteq R_b$ since $b_i\in B_2$, and $R_b\not\subsetneq R_{b_i}$ since $b\in \tilde{B}\subseteq B^{\max}$. This shows that $\tilde{R}$ is a $(k-1,1)$-semi-equation. Thus, by the induction hypothesis, there are $A_0\subseteq \tilde{A}$ and $B_0\subseteq B_2$ such that $\tilde{R}(A_0, B_0)=R(A_0, B_0)$ is homogeneous, with $|A_0|\geq \frac{1}{6^{k-2}}|\tilde{A}|\geq \frac{1}{6^{k-1}}|A|$ and $|B_0|\geq \frac{1}{6^{k-2}}|B_2|\geq \frac{1}{6^{k-1}}|B|$ as required.\\
    \\
    \underline{Case 2: $|\{b\in B: |R_b|\geq \frac{1}{6}|A|\}|\geq \frac{1}{2}|B|$.}

    Writing $B':=\{b\in B: |R_b|\geq \frac{1}{6}|A|\}$, we have $|B'|\geq \frac{1}{2}|B|$. Let $A':=\bigcup_{b\in B'}R_b$. If $|A'|\leq \frac{1}{2}|A|$ then we are done, as $R(A\setminus A', B')$ is anti-complete with $|A\setminus A'|\geq \frac{1}{2}|A|$. Thus, we may assume that $|A'|\geq \frac{1}{2}|A|$.
    
    Fix any $b\in B'$ such that $R_b$ is minimal in the poset $\{R_{b'}: b'\in B'\}$ (under inclusion). Then, if we define $B_1:=\{b'\in B': R_b\subseteq R_{b'}\}$ and $B_2:=\{b'\in B': R_b\not\subseteq R_{b'}\text{ and }R_{b'}\not\subseteq R_b\}$, we have that $B'=B_1\cup B_2$. If $|B_1|\geq \frac{1}{2}|B'|\geq \frac{1}{4}|B|$ then we are done, as $R(R_b, B_1)$ is complete with $|R_b|\geq \frac{1}{6}|A|$. Thus, we may assume that $|B_2|\geq \frac{1}{2}|B'|\geq \frac{1}{4}|B|$.
    
    Let $\tilde{R}$ be the restriction of $R$ to $R_b\times B_2$; we claim that $\tilde{R}$ is a $(k-1,1)$-semi-equation. Indeed, suppose $b_1, \dots, b_{k-1}\in B_2$ are such that $\bigcap_{i\in [k-1]} \tilde{R}_{b_i}\neq\emptyset$. Then, $R_b\cap \bigcap_{i\in [k-1]} R_{b_i}\neq\emptyset$. Since $R$ is a $(k,1)$-semi-equation, either there is $i\in [k-1]$ such that $R_{b_i}\subseteq R_b$ or $R_b\subseteq R_{b_i}$, or there are distinct $i,j\in [k-1]$ such that $R_{b_i}\subseteq R_{b_j}$ (and so $\tilde{R}_{b_i}\subseteq \tilde{R}_{b_j}$). By definition of $B_2$, the latter must hold, showing that $\tilde{R}$ is a $(k-1,1)$-semi-equation. Thus, by the induction hypothesis, there are $A_0\subseteq R_b$ and $B_0\subseteq B_2$ such that $\tilde{R}(A_0, B_0)=R(A_0,B_0)$ is homogeneous, with $|A_0|\geq \frac{1}{6^{k-2}}|R_b|\geq \frac{1}{6^{k-1}}|A|$ and $|B_0|\geq \frac{1}{6^{k-2}}|B_2|\geq \frac{1}{6^{k-1}}|B|$ as required.
\end{proof}
We have shown that every 1-semi-equation satisfies sEH. Note that this fails for $n$-semi-equations when $n\geq 2$.
\begin{prop}
    For all $k>n\geq 2$, there is a $(k,n)$-semi-equation that does not satisfy sEH.
\end{prop}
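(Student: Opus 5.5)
The plan mirrors the proof of the corresponding Zarankiewicz statement. By Remark \ref{rmksemieqn}(ii), every $(3,2)$-semi-equation is a $(k,n)$-semi-equation for all $k>n\geq 2$. So it suffices to exhibit a single $(3,2)$-semi-equation that fails sEH. The candidate is the point-line incidence relation $I$ over $\mathbb{F}_p^{\text{alg}}$, for a fixed prime $p$. By Remark \ref{pointlineis(3,2)}, $I$ is a $(3,2)$-semi-equation over any field, so all the work lies in showing that $I$ does not satisfy $\delta$-sEH for any $\delta>0$.

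For this I will use the finite grids from the previous proposition. Take $A=P_m$, the $q^2$ points of $\mathbb{F}_q^2$ with $q=p^m$, and $B=L_m$, the $q^2$ non-vertical lines over $\mathbb{F}_q$. Suppose $A_0\subseteq A$ and $B_0\subseteq B$ satisfy $|A_0|,|B_0|\geq\delta q^2$ and $I(A_0,B_0)$ is homogeneous. I will rule out both kinds of homogeneity.

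\emph{Complete case.} This is immediate: two distinct lines meet in at most one point, so a complete pair has $|A_0|\leq 1$ or $|B_0|\leq 1$, which is impossible once $\delta q^2>1$.

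\emph{Anti-complete case.} This is the main obstacle, and I will handle it by counting incidences. One approach is the expander mixing lemma for the point-line incidence graph over $\mathbb{F}_q$, which is biregular of degree $q$ with second singular value $\sqrt{q}$. It gives
\[\Bigl|\,|I(A_0,B_0)|-\tfrac{|A_0||B_0|}{q}\Bigr|\leq \sqrt{q|A_0||B_0|}.\]
Alternatively, there is an elementary Cauchy--Schwarz argument. One counts pairs of points of $A_0$ on a common line of $B_0$, using that each line contains $q$ points and two points determine at most one line. Either way, anti-completeness forces $|A_0||B_0|/q\leq\sqrt{q|A_0||B_0|}$, hence $|A_0||B_0|\leq q^3$. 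But the hypothesis gives $|A_0||B_0|\geq\delta^2q^4$, a contradiction once $q>\delta^{-2}$.

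Since $\mathbb{F}_{p^m}\subseteq\mathbb{F}_p^{\text{alg}}$ for every $m$, letting $m\to\infty$ shows that $I$ fails $\delta$-sEH for every $\delta>0$. Of the steps above, the only one needing care is the spectral or Cauchy--Schwarz verification of the incidence estimate. For the Cauchy--Schwarz route I would present the inequality $\sum_{\ell\in B}|\ell\cap A_0|^2\leq |A_0|^2+q|A_0|$, which follows from counting ordered pairs of points of $A_0$ on a common line. I would then apply Cauchy--Schwarz over the complement of $B_0$ to conclude.
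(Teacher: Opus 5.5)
Your proposal is correct. Its skeleton is exactly the paper's: reduce to the $(3,2)$ case via Remark \ref{rmksemieqn}(ii), then take the point-line incidence relation $I$ over $\mathbb{F}_p^{\text{alg}}$, which is a $(3,2)$-semi-equation by Remark \ref{pointlineis(3,2)}. The difference is the final step. The paper does not prove that $I$ fails sEH; it cites \cite[Proposition 6.2]{distalregularitylemma}. You instead prove it directly on the grid $P_m\times L_m$, and your argument is sound. The non-vertical incidence graph is $q$-biregular. Its Gram matrix is $qI+J-V$, where $V$ records "same $x$-coordinate", and this has eigenvalues $q^2$, $q$ and $0$, so the second singular value is indeed $\sqrt{q}$. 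In the Cauchy--Schwarz route, anti-completeness puts all $q|A_0|$ incidences of $A_0$ on lines of $B\setminus B_0$. This gives $q^2|A_0|^2\leq (q^2-|B_0|)(|A_0|^2+q|A_0|)$, which rearranges directly to $|A_0||B_0|\leq q^3$. The citation buys brevity. Your route buys a self-contained and quantitative statement: on the same grid the paper uses for its Zarankiewicz counterexample, every homogeneous pair satisfies $|A_0||B_0|\leq q^3$, so $\delta$-sEH already fails once $p^m>\delta^{-2}$.
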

\begin{proof}
    By Remark \ref{rmksemieqn}(ii), it suffices to exhibit a $(3,2)$-semi-equation that does not satisfy sEH. Our example, once again, is the point-line incidence relation $I$ over $\mathbb{F}_p^{\text{alg}}$, for any prime $p$; by Remark \ref{pointlineis(3,2)}, $I$ is a $(3,2)$-semi-equation. In \cite[Proposition 6.2]{distalregularitylemma}, Chernikov and Starchenko show that $I$ does not satisfy sEH.
\end{proof}
\bibliographystyle{plainurl}
\bibliography{bib}
\end{document}